\documentclass[letterpaper, 11pt]{article}
\usepackage[utf8]{inputenc}
\usepackage[T1]{fontenc}
\usepackage{amsmath, amssymb, amsthm, stmaryrd, dsfont, mathrsfs}
\usepackage{fullpage}
\usepackage{graphicx}
\usepackage{tikz-cd}
\usepackage{longtable}
\usepackage{enumitem}
\usepackage{multicol}
\usepackage{color,soul}
\usepackage{float}
\usepackage[normalem]{ulem}
\usepackage{hyperref}

\newtheorem{thm}{Theorem}[section]  
\newtheorem*{thm*}{Theorem}
\newtheorem{lem}[thm]{Lemma}
\newtheorem{cor}[thm]{Corollary}
\newtheorem{prop}[thm]{Proposition}
\newtheorem{rem}[thm]{Remark}

\newtheorem{conj}[thm]{Conjecture}

\title{notes}

\begin{document}
\noindent
\begin{center}
\large \textbf{Denominators in the Farey Sequence, the Stern Sequence, and the Frobenius Problem with Relatively Prime Multipliers} \\ 
\vspace{.75cm}
\textbf{Grace Natalie Jaffe} \\
\vspace{.5cm}
\hrule 
\vspace{.75cm}
\end{center}

\normalsize
\noindent

Abstract: The Frobenius problem asks, given two positive, relatively prime integers $a$ and $b$, what is the maximum integer that cannot be expressed as a linear combination of $a,b$ using non-negative coefficients? This problem is resolved. However, if we add the restriction that the coefficients themselves must also be relatively prime, we create an open problem with connections to the Stern and Farey sequences and the Jacobsthal function. We prove that under this new restriction, a maximum inexpressible integer still always exists, and explore its values and properties as functions of $a,b$.

\section{Introduction}
\label{sec:Introduction}

In 1850, Eisenstein wrote to Stern concerning which denominators appear between $\frac{t}{a}, \frac{s}{b}$ consecutive in some Farey sequence, and how often \cite{eisen}. This inspired Stern to devise his \emph{diatomic array}. In this paper, we use this and techniques of analytic number theory to show that if $\gcd(a,b) = 1$, then there is a computable function $g(a,b)$, so that if $n > g(a,b)$, then a fraction $\frac{m}{n}$ eventually appears between $\frac{t}{a}$ and $\frac{s}{b}$. This translates into a variation on the Frobenius problem: suppose $a$ and $b$ are positive, relatively prime integers. If $n > g(a,b)$, then there exist positive, relatively prime integers $x$ and $y$ so that $n = ax + by$.

Recall that the \emph{Farey sequence of order $n$}, denoted $F_n$, is defined as ``the ascending series of irreducible fractions between 0 and 1 whose denominators do not exceed $n$'' \cite{hardy}. The Farey sequences have a rich structure, including the fact that if $\frac{t}{a}, \frac{s}{b}$ are consecutive terms in a Farey sequence of any order, then $as-bt=1$. Furthermore, the first term to appear between these terms will be $\frac{t+s}{a+b}$, in the Farey sequence of order $a+b$. 

The \emph{diatomic array} is defined as having first row $a, b$ and each subsequent row a repetition of the previous row with the sum of consecutive terms inserted between them, as shown in Figure \ref{fig:da}. We will borrow the notation of \cite{rez1} in denoting the entire diatomic array with first row $a,b$ by $Z(a,b)$. It is convenient to classify the general recurrence in terms of rows and positions within the diatomic array: let a term $Z(r,k;a,b)$ occur in the $k^{th}$ position from the left in the $r^{th}$ row down of the diatomic array $Z(a,b)$, where the counts for both $k$ and $r$ start at 0 \cite{rez1}. Under this system, the first row (consisting of an $a$ and a $b$ and no other entries) is row 0, and has an $a$ in position 0 and a $b$ in position 1. Rewriting the recurrence in this language, we see that each entry in an even position, say position $2k$, is equal to the entry in position $k$ in the row immediately above, and each entry in an odd position, say position $2k+1$, is equal to the sum of the entries in positions $k$ and $k+1$ in the row immediately above. 

We define the Stern sequence by
\[s(0) = 0, \ \ \  s(1) = 1; \ \ \ \ \ \ \ \ \ s(2n)=s(n), \ \ \ s(2n+1)=s(n)+s(n+1) \text{ for } n \geq 1. \ \ \ \cite{rez2} \]
The Stern sequence satisfies a number of interesting properties that relate to the diatomic array \cite{rez2}, including:
\begin{itemize}
    \item If $0 \leq k \leq 2^r$, then $Z(r,k;0,1)=s(n)$.
    \item $Z(r,k;a,b) = s(2^r-k)a+s(k)b$.
    \item $s(2^r n + k) = s(2^r-k) s(n) + s(k) s(n + 1)$ for $0 \leq k \leq 2^r$.
    \item $s(2^r)=1$ and $s(2^r-1)=r$.
\end{itemize}

\begin{figure}
	\centering
	\includegraphics[width=6.5in]{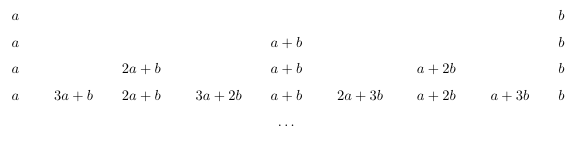}
	\caption{Rows 0-3 of the diatomic array $Z(a,b)$.}
	\label{fig:da}
\end{figure}

Eisenstein wrote Stern that he had a proof that every denominator between $\frac{t}{a}, \frac{s}{b}$ in some Farey sequence must appear in the diatomic array $Z(a,b)$ \cite{eisen}.

It is not difficult to show that $s:\mathbb{Z}_{\geq 0} \to \mathbb{Z}_{\geq 0}$ is surjective. In particular, we emphasize the point that every positive integer appears at least once somewhere within the diatomic array $Z(0,1)$. This will \emph{not} hold if we use the same recurrence relation but with a different choice of initial conditions, i.e. a different choice of $a,b$. One trivial way this condition can fail is if $\gcd(a,b) > 1$. It follows from the above that $Z(r,k;a,b)$ must be divisible by $\gcd(a,b)$ for all $r,k$. Then any natural number not divisible by $\gcd(a,b)$ cannot be expressed as $Z(r,k;a,b)$ and equivalently does not appear in $Z(a,b)$. However, adding in the condition that $\gcd(a,b)=1$ is not sufficient to conclude that every positive integer will appear in $Z(a,b)$. Suppose, for example, that we choose $a=5, b=7$. Then $\gcd(a,b)=1$, but we still cannot express every natural number as $Z(r,k;5,7)$ for some $r,k$. A cursory examination of $Z(5,7)$ makes clear that each ``new'' entry past row 0 must be at least 12:

\begin{align*}
&5 & \ & \ & \ & \ & \ & \ & \ & 7\\
&5 & \ & \ & \ & 12 & \ & \ & \ & 7\\
&5 & \ & 17 & \ & 12 & \ & 19 & \ & 7\\
&5 & 22 \ \ \ \ \ \ \ \ \ \ \ \  & 17 & 29 \ \ \ \ \ \ \ \ \ \ \ \ & 12 &  31\ \ \ \ \ \ \ \ \ \ \ \ \ & 19  & 26 \ \ \ \ \ \ \ \ \ \ \ \  & 7\\
&\ & \ & \ & \ & \ldots & \ & \ & \ & \ \\
\end{align*}

Then $1,2,3,4,6,8,9,10,11 \notin Z(5,7)$. Our chief goals here are to explore, for a general $a,b \in \mathbb{Z}^+$, which non-negative integers can be expressed as $Z(r,k;a,b)$ for some $r,k$, which cannot, and why. This work is drawn from my thesis, \textit{Topics in Analytic Number Theory}. In later sections, we will define $g(a,b)$ as the largest integer that cannot be expressed as $Z(r,k;a,b)$ and show not only that $g(a,b)$ always exists but also give an upper bound of $N(a,b)=p_1p_2 \cdots p_r$ for the minimum $r$ satisfying $ 2ab < \frac{p_3-1}{2} \cdot \frac{p_4-1}{2} \cdots \frac{p_r-1}{2} $ where $p_i$ is the $i^{th}$ prime (see Theorem \ref{thm: ta-da}). We will note $g(a,b)$ is bounded below by $f(a,b)$, the Frobenius number of $a$ and $b$, in Section \ref{sec: The Frobenius Connection}. We will focus on building better upper bounds in Theorem \ref{thm:fareyub} and Corollary \ref{cor:refinement}, and study patterns in the values of $g(a,b)$ in Sections \ref{sec: Exact Values} and \ref{sec: future}. First, we return our attention to the diatomic array at large.

\section{Definitions and Preliminary Results}
\label{sec: def,pr}

For any choice of $a,b$, we set \[S_{\left(a,b\right)} = \left\{ n \in \mathbb{Z}_{\geq 0} : \exists r,k \in \mathbb{Z}_{\geq 0} \text{ s.t. } n = Z(r,k;a,b) \text{ and } k \leq 2^r \right\} .\] Then $S_{\left(a,b\right)}$ is the set of exactly those non-negative integers appearing in the diatomic array $Z(a,b)$. For $\gcd(a,b)=d>1$, if $d|n$, we can verify
\[ n \in S_{\left(a,b\right)} \iff \frac{n}{d} \in S_{\left(\frac{a}{d},\frac{b}{d}\right)} \iff n \in d S_{\left(\frac{a}{d},\frac{b}{d}\right)}. \]
If $d \nmid n$, then $\frac{n}{d} \notin \mathbb{Z}_{\geq 0}$ and so $\frac{n}{d} \notin S_{\left(\frac{a}{d},\frac{b}{d}\right)}$; it follows from the above that $n \notin S_{\left(a,b\right)}$. Then $S_{\left(a,b\right)} = d S_{\left(\frac{a}{d},\frac{b}{d}\right)}$, which justifies restricting our consideration to cases where $\gcd(a,b)=1$.

We note that $Z(r,2^r-k;a,b) = Z(r,k;b,a)$ \cite{rez2}, and it follows that $S_{\left(a,b\right)} = S_{\left(b,a\right)}$, so we may treat the two interchangeably.

\begin{thm}
\label{thm: big buddy} There exist $x,y \in \mathbb{Z}_{\geq 0}$ such that $n = ax+by$ and $\gcd(x,y)=1$ if and only if $n \in S_{\left(a,b\right)}$.
\end{thm}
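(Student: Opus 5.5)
The plan is to reduce the statement to a fact about the Stern sequence, using the identity $Z(r,k;a,b) = s(2^r-k)a + s(k)b$ recalled in the introduction. Under this identity, membership $n \in S_{(a,b)}$ means exactly that $n = ax+by$ with $(x,y) = (s(2^r-k), s(k))$ for some $r \geq 0$ and $0 \leq k \leq 2^r$. It therefore suffices to prove the set equality
\[ \{ (s(2^r-k), s(k)) : r \geq 0,\ 0 \leq k \leq 2^r \} = \{ (x,y) \in \mathbb{Z}_{\geq 0}^2 : \gcd(x,y)=1 \}. \]
Both directions of the theorem then follow at once: an index pair $(r,k)$ with $n = Z(r,k;a,b)$ yields a coprime pair $(x,y)$, and conversely.

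For the inclusion $\subseteq$, I will show that $s(2^r-k)$ and $s(k)$ are coprime. I will do this by induction on $r$, proving the stronger statement that consecutive entries of each row of $Z(0,1)$ are coprime. Row $0$ is $0,1$, which is a coprime pair. Passing from one row to the next replaces each adjacent pair $u,v$ by the two pairs $u,\,u+v$ and $u+v,\,v$, and $\gcd$ is preserved under this replacement. Since every entry of a row is adjacent to some entry, every entry of row $r$ is part of an adjacent coprime pair.

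That alone does not show $\gcd(s(2^r-k),s(k))=1$, which pairs two entries that may be far apart. For this I will use the structural fact that $Z(r,k;1,0)=s(2^r-k)$ and $Z(r,k;0,1)=s(k)$. Row $r$ of $Z(1,0)$ is the reversal of row $r$ of $Z(0,1)$, by the symmetry $Z(r,2^r-k;a,b)=Z(r,k;b,a)$ noted in Section \ref{sec: def,pr}. So I should track the vector pair $(s(2^r-k), s(k))$ directly. The array $Z$ built over $\mathbb{Z}^2$ with first row $(1,0),(0,1)$ has entries $(s(2^r-k),s(k))$, and adjacent entries form a matrix of determinant $\pm 1$; this is the Farey/Stern–Brocot structure. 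The determinant $1$ is preserved by the mediant insertion, by the same computation as above. Any vector that is a row of a determinant-one integer matrix is primitive, which gives $\gcd=1$.

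For the inclusion $\supseteq$, given coprime $x,y \geq 0$ I will produce $(r,k)$ with $(s(2^r-k),s(k)) = (x,y)$ by strong induction on $x+y$. The base cases are $(1,0)$ at $k=0$ and $(0,1)$ at $k=2^r$. For $x,y \geq 1$ with, say, $x>y$, the pair $(x-y,y)$ is coprime with a smaller sum, so by induction it occurs as some entry of the vector array. The main obstacle is showing that $(x,y)$ then appears as a mediant of that entry with an adjacent one. The cleanest route is to run the Euclidean algorithm backwards, mirroring the Stern–Brocot tree. Every coprime vector is the mediant of the unique adjacent pair $P,Q$ with $\det(P,Q)=1$ and $P+Q=(x,y)$. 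Those two vectors have smaller coordinate sums and are themselves adjacent at the stage where they first both appear. The mediant insertion at the next row then produces $(x,y)$.

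To make this rigorous I will strengthen the induction hypothesis. The claim will be that every pair of nonnegative vectors $P,Q$ with $\det(P,Q)=1$, ordered appropriately, appears as an adjacent pair in some row. The induction is on the sum of all four coordinates. The step observes that if $P,Q$ is not the initial pair $(1,0),(0,1)$, then one of $P-Q$ or $Q-P$ is nonnegative. Calling it $R$, the pair $P,Q$ arises by mediant insertion from the adjacent pair $(R,Q)$ or $(P,R)$, which has determinant $1$ and smaller coordinate sum. Checking this nonnegativity claim from $\det=1$ is the delicate step of the whole argument.
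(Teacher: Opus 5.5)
Your argument is correct, and it takes a genuinely different route from the paper's.

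The paper's route runs as follows. It proves $\gcd(s(2^r-k),s(k))=1$ by induction on $r$, using the identity $s(2^r+m)=s(2^r-m)+s(m)$ to reduce $\gcd(s(2^r-m),s(2^r+m))$ to $\gcd(s(2^r-m),s(m))$ from the previous row. For the converse it imports the Stern--Brocot fact (Remark~\ref{rem: knuth}) that a coprime pair $(y,x+y)$ equals $(s(k),s(2^r+k))$ for some odd $k$. It then recovers $x=s(2^r-k)$ from the same identity.

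You instead drive both directions with one invariant of the vector array with first row $(1,0),(0,1)$: adjacent entries have determinant $1$. Primitivity of every entry is then immediate. Your descent is a self-contained proof of the Stern--Brocot completeness that the paper cites.

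The step you call delicate is short. If neither $P-Q$ nor $Q-P$ is nonnegative, there are two cases:
\begin{itemize}
\item $p_1<q_1$ and $p_2>q_2$, which gives $p_1q_2-p_2q_1\le-(q_1+q_2)<0$;
\item $p_1>q_1$ and $p_2<q_2$, which gives $p_1q_2-p_2q_1\ge q_1+q_2$, so determinant $1$ forces $Q=(0,1)$ and then $P=(1,0)$.
\end{itemize}
You should also spell out why the splitting $(x,y)=P+Q$ with $P,Q\ge 0$ and $\det(P,Q)=1$ exists for coprime $x,y\ge1$. Take $p_1\in[1,x]$ with $p_1y\equiv1\pmod{x}$ and set $p_2=(p_1y-1)/x\in[0,y)$.

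The paper's route is shorter, given its cited identities. Yours needs only the linearity formula $Z(r,k;a,b)=s(2^r-k)a+s(k)b$, and it treats the boundary pairs uniformly. In fact the paper's appeal to Remark~\ref{rem: knuth} does not literally cover $(x,y)=(1,0)$, since no odd $k$ has $s(k)=0$. Your base case $(1,0)$ at $k=0$ handles this.
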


We make several smaller claims before proceeding to the proof of Theorem \ref{thm: big buddy}.

\begin{prop}
\label{prop: at least}
Let $r \geq 1$, $0 \leq k \leq 2^r$, and $a < b$. Then each new element in $Z(a,b)$ (i.e. every element in an odd position) is at least $ra+b$.  
\end{prop}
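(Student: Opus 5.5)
The plan is induction on $r$, using only the recurrence that defines the diatomic array. The statement is that every entry $Z(r,k;a,b)$ with $k$ odd and $0\le k\le 2^r$ satisfies $Z(r,k;a,b)\ge ra+b$. The only auxiliary fact needed is a crude lower bound on \emph{all} entries.

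First I would record that every entry of $Z(a,b)$ is at least $a$. Row $0$ consists of $a$ and $b$, and $a<b$. Every later entry is either a copy of an entry from the row above (even positions) or a sum of two such entries (odd positions). Since $a,b$ are positive integers, induction on the row shows every entry is at least $\min(a,b)=a$. Equivalently, by the formula $Z(r,k;a,b)=s(2^r-k)a+s(k)b$, the coefficients $s(2^r-k)$ and $s(k)$ are non-negative and not both zero.

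For the base case $r=1$, the only odd position is $k=1$, and $Z(1,1;a,b)=a+b=1\cdot a+b$.

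For the inductive step, suppose $r\ge 2$ and the claim holds for row $r-1$. Consider an odd position $2j+1$ in row $r$, with $0\le j\le 2^{r-1}-1$. By the recurrence,
\[Z(r,2j+1;a,b)=Z(r-1,j;a,b)+Z(r-1,j+1;a,b).\]
Exactly one of $j$ and $j+1$ is odd, and both lie in $[0,2^{r-1}]$. The entry of row $r-1$ in the odd position is at least $(r-1)a+b$ by the inductive hypothesis, since $r-1\ge 1$. The other entry is at least $a$ by the preliminary observation. Summing gives $Z(r,2j+1;a,b)\ge (r-1)a+b+a=ra+b$, which completes the induction.

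There is no real obstacle here. The only points needing care are that the inductive hypothesis applies only when $r-1\ge1$, which is why the base case is $r=1$, and that both neighbours $j$ and $j+1$ are valid positions in row $r-1$. As a sanity check, the bound is attained in the example $Z(5,7)$: the new entries in rows $1$, $2$, $3$ have minima $12$, $17$, $22$, which equal $ra+b$ for $r=1,2,3$.
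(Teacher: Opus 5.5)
Your proposal is correct and follows the same route the paper indicates (induction on $r$, which the paper only sketches). Your inductive step fills in that sketch: write an odd entry of row $r$ as the sum of its two parents, bound the odd-position parent by $(r-1)a+b$ using the inductive hypothesis, and bound the other parent by $a$. That last bound uses $a\ge 0$, which holds in the paper's setting of positive integers.
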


This can be proven by inducting on $r$, and less formally, follows intuitively from Figure $\ref{fig:da}$.
It follows as a result that every odd element in row $(m+1)$ of $Z(a,b)$ is greater than at least one odd element in row $m$ of $Z(a,b)$.

\begin{prop}
\label{prop: first r rows}
If $a<b$, $n < ra+b$, and $n$ does not appear in the first $r$ rows of $Z(a,b)$, then $n \notin Z(a,b)$.
\end{prop}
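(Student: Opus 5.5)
The plan is to argue by contrapositive and use Proposition \ref{prop: at least}. Suppose $n \in Z(a,b)$, so $n = Z(r',k;a,b)$ for some $r'$ and some $k \le 2^{r'}$. We want to show that $n$ already appears in one of the first $r$ rows, that is, in rows $0,1,\dots,r-1$. We may take $r'$ minimal, so that $n$ first appears in row $r'$. If $r' \le r-1$ we are done. Otherwise $r' \ge r \ge 1$.

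The key step is to analyze a first appearance in row $r' \ge 1$. By the recurrence, every entry in an even position $2j$ of row $r'$ equals the entry in position $j$ of row $r'-1$. So an entry that first appears in row $r'$ must sit in an odd position. It is therefore a ``new'' element of row $r'$, and Proposition \ref{prop: at least} (applied with row index $r'$) gives
\[ n \ge r'a + b \ge ra + b, \]
using $a > 0$ and $r' \ge r$. This contradicts $n < ra+b$. Hence $r' \le r-1$, so $n$ appears in the first $r$ rows, and the contrapositive is exactly the statement.

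One bookkeeping point is the indexing. ``The first $r$ rows'' means rows $0$ through $r-1$, and the bound in Proposition \ref{prop: at least} is stated for row $r$, with $r \ge 1$. If the convention were instead that new elements of row $r$ are at least $ra+b$ and ``first $r$ rows'' meant rows $0,\dots,r$, the same argument works with the trivial adjustment. I would fix the convention consistent with Proposition \ref{prop: at least}: the new elements of row $1$ are $a+b \ge 1\cdot a+b$, those of row $2$ are at least $2a+b$, and so on.

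The main obstacle is making sure Proposition \ref{prop: at least} is applied in the correct sense. It bounds the odd-position entries of row $r'$, not every entry of row $r'$, and even-position entries are copies of earlier entries. Minimality of $r'$ is exactly what forces $n$ into an odd position, so the proof should state that reduction explicitly. I would also assume $a>0$ so that $r'a \ge ra$; this holds since $a,b$ are positive, and the case $a=0$ is excluded by $\gcd(a,b)=1$ together with $a<b$, unless $b=1$, where the claim is vacuous in the relevant range.
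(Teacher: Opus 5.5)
Your proof is correct and is essentially the paper's argument: the paper inducts on $t$ over rows $r+t$ using the same two facts (even-position entries are copies from the row above, and odd-position entries of row $r'$ are at least $r'a+b$ by Proposition~\ref{prop: at least}), and your minimal first-appearance row is just the well-ordering form of that induction. One point to make explicit: reading ``the first $r$ rows'' as rows $0,\dots,r-1$ requires $r\ge 1$, since for $r=0$ the claim fails with $n=a$; the paper's induction starting at row $r+1$ suggests rows $0,\dots,r$ are meant, and under that reading $r=0$ is harmless.
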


This time, the proof considers the $\left(r+t\right)^{th}$ row of $Z(a,b)$ for $t \geq 1$ and proceeds by induction on $t$.

\begin{thm}
\label{thm: relatively prime main}
Let $r \geq 0$ and $0 \leq k \leq 2^r$. Then $\gcd(s(k),s(2^r-k))=1$.
\end{thm}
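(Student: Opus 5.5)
We prove the statement by induction on $r$, using the row structure of the diatomic array $Z(0,1)$ together with the facts recorded in the introduction. The key observation is that, by the second listed property, $Z(r,k;a,b) = s(2^r-k)a + s(k)b$. So the pair $(s(2^r-k), s(k))$ is the pair of coefficients of $a$ and $b$ attached to position $k$ in row $r$. The claim is therefore that every coefficient pair arising in the diatomic array is coprime.

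We track coefficient pairs rather than values. For position $k$ in row $r$, write $v(r,k) = (s(2^r-k), s(k))$. The base case $r=0$ gives the pairs $(1,0)$ for $k=0$ and $(0,1)$ for $k=1$, and both are coprime. For the inductive step, the defining recurrence of the array says two things. An even position $2k$ in row $r+1$ carries the same entry as position $k$ in row $r$. An odd position $2k+1$ carries the sum of the entries at positions $k$ and $k+1$ in row $r$. Since this holds for every choice of $a,b$, and $Z$ is linear in $(a,b)$, we get
\[ v(r+1,2k) = v(r,k), \qquad v(r+1,2k+1) = v(r,k)+v(r,k+1). \]
One can also check this directly from the Stern recurrence: $s(2k)=s(k)$, and $s(2^{r+1}-2k) = s(2^r-k)$. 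Similarly, $s(2k+1)=s(k)+s(k+1)$ and $s(2^{r+1}-2k-1) = s(2^r-k-1)+s(2^r-k)$.

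Coprimality of single pairs does not survive addition, so we strengthen the inductive hypothesis. The stronger statement is that consecutive pairs in row $r$ form a unimodular matrix:
\[ \det\begin{pmatrix} s(2^r-k) & s(k) \\ s(2^r-k-1) & s(k+1)\end{pmatrix} = 1 \quad \text{for } 0\le k<2^r. \]
In row $0$ this determinant is $\det\begin{pmatrix}1&0\\0&1\end{pmatrix}=1$. Passing to row $r+1$, each consecutive pair of positions is one of two kinds: $(v(r,k),\, v(r,k)+v(r,k+1))$, or $(v(r,k)+v(r,k+1),\, v(r,k+1))$. Either matrix is obtained from $\begin{pmatrix} v(r,k)\\ v(r,k+1)\end{pmatrix}$ by adding one row to the other, which preserves the determinant. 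So the determinant is $1$ throughout by induction.

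Finally, if $d$ divides both entries of $v(r,k)$, then $d$ divides this determinant, so $d\mid 1$. For $k<2^r$ we use the matrix formed by positions $k$ and $k+1$. For $k=2^r$ we use positions $k-1$ and $k$. This gives $\gcd(s(k),s(2^r-k))=1$.

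The main obstacle is the need to strengthen the hypothesis from plain coprimality to the determinant identity, which is the Farey-neighbor property $as-bt=1$ mentioned in the introduction. Once it is stated, the induction is routine. The only care required is in handling the boundary positions $k=0$ and $k=2^r$ and in checking the index conventions of the recurrence at the right end of each row.
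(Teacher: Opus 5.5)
Your proof is correct, but it takes a different route from the paper.

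\textbf{The paper's route.} The paper does not strengthen the hypothesis. It first uses the symmetry $k \mapsto 2^{r+1}-k$ to assume $k \le 2^r$. It then applies the cited identity $s(2^r n + k) = s(2^r-k)s(n)+s(k)s(n+1)$ with $n=1$, which gives $s(2^{r+1}-k) = s(2^r-k)+s(k)$ (written there with $k=2^r-m$). Hence $\gcd(s(k),s(2^{r+1}-k))=\gcd(s(k),s(2^r-k))$, so one Euclidean step reduces a row-$(r+1)$ pair to a row-$r$ pair.

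\textbf{Why it needs no strengthening.} In effect the paper uses the self-similarity of the array: the left half of row $r+1$ of $Z(a,b)$ is row $r$ of $Z(a,a+b)$. Under this recurrence each new coefficient pair comes from a \emph{single} old pair by an elementary unimodular move. So plain coprimality inducts directly. The obstacle you identified, that the insertion recurrence builds a pair as the sum of \emph{two} old pairs, never arises.

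\textbf{What your route buys.} Your argument uses only the defining recurrences $s(2n)=s(n)$ and $s(2n+1)=s(n)+s(n+1)$, with no external identity and no symmetry reduction. The cost is strengthening to the adjacent-determinant identity $s(2^r-k)s(k+1)-s(k)s(2^r-k-1)=1$. That stronger statement is worth having: it is exactly the Farey-neighbour relation $as-bt=1$ for consecutive coefficient pairs. The paper instead takes that relation from the literature in its later Farey-sequence section.
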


\begin{proof}
We proceed by induction on $r$. Given that $\left\{s(0),s(1),s(2),s(3),s(4) \right\} = \left\{0,1,1,2,1\right\} $, it is easy to verify that the claim holds for $r=0,1,2$. Suppose now that the claim holds for some $r \geq 2$ and consider $\gcd(s(k),s(2^{r+1}-k) )$ for $0 \leq k \leq 2^{r+1}$. It suffices to consider $0 \leq k \leq 2^r$, for if $k \geq 2^r$, then by letting $l=2^{r+1}-k$, we have
\[ \gcd ( s(k), s(2^{r+1}-k) ) = \gcd (s(2^{r+1}-l),s(l) ) = \gcd (s(l),s(2^{r+1}-l)) \]
where $0 \leq l \leq 2^r$. Then we may assume without loss of generality that $0 \leq k \leq 2^r$, and consequently may put $k = 2^r-m$ for $0 \leq m \leq 2^r$. Then
\[ \gcd (s(k),s(2^{r+1}-k)) =\gcd (s(2^r-m),s(2^{r+1}-2^r+m) ) 
=\gcd (s(2^r-m),s(2^r+m)). \]
By recalling $s(2^r n + k) = s(2^r-k) s(n) + s(k) s(n +1)$ for $0 \leq k \leq 2^r$ \cite{rez2} and setting $n=1,k=m,$ we obtain
\[ s(2^r+m) =s(2^r \cdot 1 +m) = s(2^r-m)s(1)+s(m)s(2) = s(2^r-m)+s(m).\]
Then
\begin{align*}
\gcd (s(k),s(2^{r+1}-k)) &= \gcd (s(2^r-m),s(2^r+m))\\
&= \gcd (s(2^r-m),s(2^r-m)+s(m))\\
&= \gcd (s(2^r-m),s(m))\\
&= 1,
\end{align*}
where the last step follows by the induction hypothesis.
\end{proof}

\begin{cor}
\label{cor: relatively prime cor}
Let $r \geq 0$ and $0 \leq k \leq 2^r$. Then $\gcd( s(k),s(2^r+k) )=1$.
\end{cor}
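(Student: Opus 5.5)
The plan is to reduce the corollary to Theorem \ref{thm: relatively prime main} by one application of the identity
\[ s(2^r n + k) = s(2^r-k)\,s(n) + s(k)\,s(n+1), \qquad 0 \leq k \leq 2^r, \]
followed by the usual Euclidean-type manipulation of the gcd. This mirrors the middle step of the proof of Theorem \ref{thm: relatively prime main}, but is used in the opposite direction.

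First, fix $r \geq 0$ and $0 \leq k \leq 2^r$, and take $n=1$ in the identity above. Since $s(1)=1$ and $s(2)=s(1)=1$, this gives
\[ s(2^r+k) = s(2^r-k) + s(k). \]
Next, subtract the multiple $1 \cdot s(k)$ of the first argument from the second, which does not change the gcd:
\[ \gcd\bigl(s(k),s(2^r+k)\bigr) = \gcd\bigl(s(k),s(2^r-k)+s(k)\bigr) = \gcd\bigl(s(k),s(2^r-k)\bigr). \]
Finally, Theorem \ref{thm: relatively prime main}, applied with the same $r$ and $k$, says the last expression equals $1$. That completes the argument.

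There is no real obstacle here; the only care needed is at the boundary cases. When $k=0$ we get $\gcd(s(0),s(2^r)) = \gcd(0,1) = 1$, and when $k=2^r$ we get $\gcd(s(2^r),s(2^{r+1})) = \gcd(1,1) = 1$. Both cases are already covered by the general computation, since $k=0$ and $k=2^r$ lie inside the range $0 \leq k \leq 2^r$ where the identity holds. Note also that $r=0$ is allowed: there $2^r=1$, and the identity with $n=1$ still gives $s(1+k)=s(1-k)+s(k)$ for $k\in\{0,1\}$, which can be checked directly.
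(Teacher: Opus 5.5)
Your proposal is correct and is essentially the paper's own proof. It specializes the identity at $n=1$ to get $s(2^r+k)=s(2^r-k)+s(k)$, subtracts $s(k)$ inside the gcd, and invokes Theorem \ref{thm: relatively prime main}; your separate checks of $k=0$, $k=2^r$ and $r=0$ are harmless but unnecessary, since the identity already covers those cases.
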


\begin{proof}
Since we have seen that $s(2^r+k)=s(2^r-k)+s(k)$ in the proof of Theorem \ref{thm: relatively prime main}, it follows that
\[ \gcd( s(k),s(2^r+k)) = \gcd( s(k),s(2^r-k)+s(k)) = \gcd( s(k),s(2^r-k)) = 1 \]
where the last equality follows directly from Theorem \ref{thm: relatively prime main}.
\end{proof}

\begin{rem}
\label{rem: knuth}
    Let $a,b \in \mathbb{Z}_{\geq 0}$ with $\gcd(a,b)=1$. Then there exists a unique odd $k$ and $r$ such that $a=s(k)$ and $b=s(2^r+k)$.
\end{rem}

This follows immediately from the fact that ``we can...regard the Stern-Brocot tree as a number system for representing rational numbers, because each positive, reduced fraction occurs exactly once'' \cite[pg. 119]{knuth}.

We now have all the ingredients needed to prove our earlier claim.

\begin{proof}[Proof of Theorem \ref{thm: big buddy}]

$\left(\impliedby\right)$ Let $n \in S_{\left(a,b\right)}$. Then there exists some $r \geq 0$, $0 \leq k \leq 2^r$ such that 
\[n = Z(r,k;a,b) = s(2^r-k)a+s(k)b.\]
By Theorem \ref{thm: relatively prime main}, $\gcd (s(k),s(2^r-k))=1$, so we may take $x = s(k)$ and $y = s(2^r-k)$, which guarantees $x,y \in \mathbb{Z}_{\geq 0}$, $\gcd(x,y)=1$, and $n=ax+by$. 

$\left(\implies \right) $ Let there exist $x,y \in \mathbb{Z}_{\geq 0}$ such that $n = ax+by$ and $\gcd(x,y)=1$. Because $\gcd(x,y)=1$, we also know $\gcd(y,x+y)=1$, and so there exists a unique odd $k$ and $r$ such that $y=s(k)$ and $x+y=s(2^r+k)$ by Remark \ref{rem: knuth}. As we have seen above, $s(2^r+k)=s(2^r-k)+s(k)$, and since $y=s(k)$,
\[ x+y=s(2^r+k)=s(2^r-k)+s(k) \implies x=s(2^r-k). \]
Then $n = ax+by=s(2^r-k)a+s(k)b$ which implies $n \in S{\left(a,b\right)}$.
\end{proof}

Interpreted through the lens of Theorem \ref{thm: big buddy}, the problem begins to bear a strong resemblance to the \emph{Frobenius problem}.

\section{The Frobenius Connection}
\label{sec: The Frobenius Connection}

For a finite set of positive integers $\left\{a_1,a_2,\ldots,a_n\right\}$ such that $\gcd(a_1,a_2,\ldots,a_n)=1$, the \emph{Frobenius number} $f(a_1,a_2,\ldots,a_n)$ is the largest integer that \emph{cannot} be expressed as a sum of the form $k_1a_1+k_2a_2+\ldots+k_na_n$ for non-negative integers $k_1,k_2,\ldots,k_n$ \cite{frob}. The \emph{Frobenius problem} is to find the Frobenius number given information about the set $\left\{a_1,a_2,\ldots,a_n\right\}$. We note that $f(a,b)=ab-a-b$ was proven by James Joseph Sylvester in 1882 \cite{frob}.

For positive integers $a,b$ such that $\gcd(a,b)=1$, we define $g(a,b)$ to be the largest integer that cannot be expressed as $ax+by$ for \emph{relatively prime}, non-negative integers $x,y$; if no such integer exists, we will say $g(a,b)=\infty$. This is essentially a variation of the Frobenius problem in which we set $n=2$ and place restrictions on the coefficients $k_1,k_2$ in addition to those on the set elements $a_1,a_2$. Note that by the previous section, $g(a,b)$ is equivalently defined as $\sup\left(\mathbb{Z} \setminus S_{\left(a,b\right)}\right)$. 

Certainly $g(a,b) \geq f(a,b)$, as there cannot be a relatively prime integral solution to $ax+by=N$ unless there is already an integral solution. However, the existence of an integral solution, or of several integral solutions, is not sufficient; consider again our example in which $a=5, b=7$. By Section \ref{sec:Introduction}, we already know $g(5,7)\geq 11$, and invoking the Frobenius problem, we can further conclude $g(5,7) \geq f(5,7) = 5 \cdot 7 - 5 - 7 =23$. We now use the FrobeniusSolve function in Mathematica to generate all non-negative integral solutions to $5x+7y=156$ in Figure \ref{fig:example}. Although we have four possible solutions, none of them satisfy the condition that $\gcd(x,y)=1$. Our choice of 156 was not accidental; in the next sections, we will develop the tools necessary to show $g(5,7)=156$, and thus that for every integer $N > 156$, we can guarantee the existence of a relatively prime integral solution to $5x+7y=N$. For now, we return our attention to a general choice of $a,b$ and begin to build the results necessary to prove that $g(a,b)$ always exists (that is, it is always finite).

\begin{figure}
	\centering
	\includegraphics[scale=0.6]{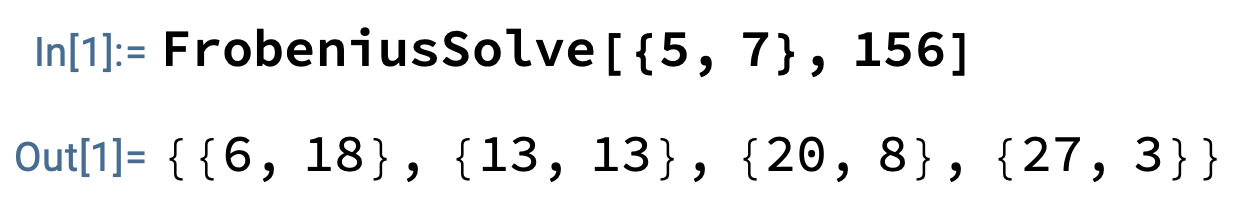}
	\caption{All non-negative integral solutions $\left(x,y\right)$ to $5x+7y=156$ have $\gcd(x,y)>1$.}
	\label{fig:example}
\end{figure}

\section{The Existence of \textit{g(a,b)}}
\label{sec: The Existence of g(a,b)}

So far, we have been able to offer an alternate definition and a lower bound for $g(a,b)$. We now prove that $g(a,b) < \infty$ for any choice of relatively prime $a,b$ using techniques from analytic number theory. 

\begin{lem}
\label{lem: ugh FINE}
For any choice of positive integers $a,b$, there exists a positive integer $r_0$ such that if $r \geq r_0$, then $ab \cdot 2^r < \left(p_1-1\right)\left(p_2-1\right)\cdots\left(p_r-1\right)$, where $p_i$ represents the $i^{th}$ prime.
\end{lem}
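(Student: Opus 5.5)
We rewrite the desired inequality as a statement about the ratio
\[ Q_r = \prod_{i=1}^{r} \frac{p_i-1}{2} . \]
The inequality $ab \cdot 2^r < (p_1-1)(p_2-1)\cdots(p_r-1)$ is equivalent to $ab < Q_r$. It therefore suffices to show two things: that $Q_r$ is eventually nondecreasing, and that $Q_r$ is unbounded. Together these give an $r_0$ with $Q_r > ab$ for every $r \geq r_0$, which is exactly the claim of Lemma \ref{lem: ugh FINE}.

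For monotonicity, we examine the factors $\frac{p_i-1}{2}$ individually. For $p_1 = 2$ the factor is $\frac{1}{2}$, and for $p_2 = 3$ it is $1$. For every $i \geq 3$ we have $p_i \geq 5$, so $\frac{p_i-1}{2} \geq 2$. Hence $Q_{r+1} \geq 2Q_r$ for all $r \geq 2$, and $Q_2 = \frac{1}{2}$.

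Induction then gives $Q_r \geq \frac{1}{2}\cdot 2^{r-2} = 2^{r-3}$ for all $r \geq 2$. So $Q_r$ is strictly increasing from $r = 2$ onward and grows at least geometrically.

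Now choose $r_0 \geq 2$ with $2^{r_0-3} > ab$; for instance $r_0 = 3 + \lceil \log_2(ab+1) \rceil$ works. Then for every $r \geq r_0$,
\[ (p_1-1)\cdots(p_r-1) = 2^r Q_r \geq 2^r \cdot 2^{r-3} \geq 2^r \cdot 2^{r_0-3} > ab\cdot 2^r . \]
This is the required inequality.

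There is no real obstacle here. The only point needing care is that the first factor is $p_1 - 1 = 1$, which is less than $2$, so the product does not dominate $2^r$ immediately. This is why we track the ratio $Q_r$ from $r = 2$ onward rather than comparing factor by factor from the start. No analytic input such as Mertens' theorem is required, although one could also note that $\prod (p_i-1)$ grows superexponentially.
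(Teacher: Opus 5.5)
Your proof is correct and follows the paper's route: both divide by $2^r$, note that the first two factors are $\frac{1}{2}\cdot 1$, and use the fact that the remaining factors $\frac{p_i-1}{2}$ are at least $2$, so the product is increasing and unbounded. Your explicit bound $Q_r \geq 2^{r-3}$ proves the growth that the paper only asserts, and it yields a concrete (not necessarily minimal) $r_0$, which is all the lemma requires.
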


\begin{proof}
Note that
\begin{align*}
  &  ab \cdot 2^r < \left(p_1-1\right)\left(p_2-1\right)\cdots\left(p_r-1\right) \\
  \iff & ab < \frac{p_1-1}{2} \cdot \frac{p_2-1}{2} \cdot \frac{p_3-1}{2} \cdots \frac{p_r-1}{2} \\
  \iff & ab < \frac{2-1}{2} \cdot \frac{3-1}{2} \cdot \frac{p_3-1}{2} \cdots \frac{p_r-1}{2} \\
  \iff & 2ab < \frac{p_3-1}{2} \cdot \frac{p_4-1}{2} \cdots \frac{p_r-1}{2}.
\end{align*}
Because $a,b$ are fixed, we know $2ab$ is fixed, whereas the right-hand side of the last inequality grows without bound. Then there is certainly a minimum $r_0$ such that $2ab < \frac{p_3-1}{2} \cdot \frac{p_4-1}{2} \cdots \frac{p_{r_0}-1}{2}$, and for all $r \geq r_0$, we have
\[ 2ab < \frac{p_3-1}{2} \cdot \frac{p_4-1}{2} \cdots \frac{p_{r_0}-1}{2} \leq \frac{p_3-1}{2} \cdot \frac{p_4-1}{2} \cdots \frac{p_{r_0}-1}{2} \cdots \frac{p_r-1}{2}. \]
By the above, we may conclude $ab \cdot 2^r < \left(p_1-1\right)\left(p_2-1\right)\cdots\left(p_r-1\right)$ for all $r \geq r_0$.
\end{proof}

\begin{lem}
\label{lem: satan's little helper}
Let $\phi(n)$ be Euler's phi function and $\omega(n)$ be the number of distinct primes dividing $n$. Then for any choice of positive integers $a,b$ and all sufficiently large $n$, we have $ ab \cdot 2^{\omega(n)} < \phi(n)$.
\end{lem}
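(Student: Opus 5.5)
We want $\phi(n)/2^{\omega(n)} > ab$ for all large $n$. Write $n=\prod_{i=1}^{k} q_i^{e_i}$ with $k=\omega(n)$. Multiplicativity gives
\[ \frac{\phi(n)}{2^{\omega(n)}} = \prod_{i=1}^{k} \frac{q_i^{e_i-1}(q_i-1)}{2}, \]
so the ratio is a product of local factors, one per prime power. Each factor is at least $\tfrac12$, and it equals $\tfrac12$ only for $2^1$. Each factor is at least $1$ when $q_i\ge 3$, or when $q_i=2$ and $e_i\ge 2$. Also every factor is at least $\tfrac{1}{2}\cdot\frac{q_i^{e_i}}{2}$, since $q_i-1\ge q_i/2$. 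The plan is to show the product is large because either many primes divide $n$ or some prime power dividing $n$ is large.

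\textbf{Case 1: many prime factors.} Suppose $k\ge r_0$, with $r_0$ from Lemma \ref{lem: ugh FINE}. The $k$ distinct primes $q_i$, taken in increasing order, satisfy $q_i\ge p_i$. Hence
\[ \phi(n)\ \ge\ \prod_{i=1}^{k}(q_i-1)\ \ge\ \prod_{i=1}^{k}(p_i-1)\ >\ ab\cdot 2^{k}. \]
This case is done directly by Lemma \ref{lem: ugh FINE}.

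\textbf{Case 2: few prime factors.} Suppose $k<r_0$. Then $n$ is a product of fewer than $r_0$ prime powers, so some prime power satisfies $q^e \ge n^{1/(r_0-1)}$ (for $r_0\ge 2$; otherwise Case 1 covers everything). The factor for that prime power is at least $q^e/4$, and every other factor is at least $\tfrac12$, with at most $r_0$ factors in total. Therefore
\[ \frac{\phi(n)}{2^{\omega(n)}} \ \ge\ 2^{-r_0}\cdot\frac{n^{1/(r_0-1)}}{2}, \]
which exceeds $ab$ once $n > (ab\,2^{r_0+1})^{r_0-1}$.

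Taking $n$ beyond this explicit threshold handles both cases, so $ab\cdot 2^{\omega(n)}<\phi(n)$ for all sufficiently large $n$.

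The main obstacle is Case 2: we must make sure that a single possible $2^1$ factor, and the other small factors, cannot drag the product down. The crude lower bound of $\tfrac12$ per factor, combined with the bound $k<r_0$, settles this cleanly.
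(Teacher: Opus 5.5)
Your proof is correct. Your Case 1 is essentially the paper's Case 1, but your Case 2 takes a different route. You pigeonhole onto a large prime power $q^e \ge n^{1/(r_0-1)}$ and bound the remaining local factors crudely by $\tfrac12$. The paper instead sets $N = p_1p_2\cdots p_{r_0}$ and observes that whenever $\omega(m)\le r_0$,
$\phi(m)/m = \prod_{q\mid m}(1-1/q) \ge \prod_{i=1}^{r_0}(1-1/p_i) = \phi(N)/N$.
Hence $m>N$ gives $\phi(m) > \phi(N) > ab\cdot 2^{r_0} \ge ab\cdot 2^{\omega(m)}$, where the middle inequality is Lemma \ref{lem: ugh FINE} at $r=r_0$ rewritten as a statement about $\phi(N)$.

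Both arguments give existence, but the thresholds differ drastically. The paper's is the primorial $p_1\cdots p_{r_0}$, which is exactly the quantity $N(a,b)$ it then uses as its explicit upper bound for $g(a,b)$. It is also the input to Theorem \ref{thm:fareyub}, which needs $\omega(N(a,b))=r$. Yours is $(ab\,2^{r_0+1})^{r_0-1}$: for $a=5$, $b=7$ one has $r_0=6$, so your threshold is $4480^5\approx 1.8\cdot 10^{18}$ against the paper's $30030$. Your local-factor argument is transparent and needs no comparison of $\phi(m)/m$ across different integers. The paper's observation that the primorial minimizes $\phi(m)/m$ among integers with at most $r_0$ prime factors is what makes the lemma quantitatively useful later.
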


\begin{proof}
Because $a,b$ are positive integers, it follows by Lemma \ref{lem: ugh FINE} that there exists $r_0 \in \mathbb{Z}^+$ such that if $r \geq r_0$, then $ab \cdot 2^r < \left(p_1-1\right)\left(p_2-1\right)\cdots\left(p_r-1\right)$. We set $N=p_1p_2\cdots p_{r_0}$ and note by computing $\phi(N)$ and $\omega(N)$ directly that $ab \cdot 2^{r_0} < \left(p_1-1\right)\left(p_2-1\right)\cdots\left(p_{r_0}-1\right)$ if and only if $ab \cdot 2^{\omega(N)} < \phi(N)$.

Let $m > N$. We divide into two cases.

Case 1: $\omega(m) > \omega(N)$. We set $m=q_1^{l_1}q_2^{l_2}\cdots q_k^{l_k}$, with $q_1 < q_2 < \ldots < q_k$ all prime and each $l_i$ a positive integer. Since $k:=\omega(m)>\omega(N)=r_0$, it follows from Lemma \ref{lem: ugh FINE} that $ab \cdot 2^k < \left(p_1-1\right)\left(p_2-1\right)\cdots\left(p_k-1\right)$. We also observe $q_i \geq p_i$ for $i \leq k$ to obtain the following:
\begin{align*}
    \phi(m) &= \phi\left(q_1^{l_1}q_2^{l_2}\cdots q_k^{l_k}\right)= \prod_{i=1}^k \phi\left(q_i^{l_i}\right) = \prod_{i=1}^k q_i^{l_i-1}\left(q_i-1\right) \geq \prod_{i=1}^k q_i^0(q_i-1) \\&= \prod_{i=1}^k (q_i-1) \geq \prod_{i=1}^k (p_i-1) > ab \cdot 2^k.
\end{align*}

Hence $ ab \cdot 2^{\omega(m)} < \phi(m)$.

Case 2: $\omega(m) \leq \omega(N)$. Again set $m=q_1^{l_1}q_2^{l_2}\cdots q_k^{l_k}$, with $q_1 < q_2 < \ldots < q_k$ all prime and each $l_i$ a positive integer. It is still true that $q_i \geq p_i$ for $i \leq k$, and $q_i \geq p_i$ implies that $1-\frac{1}{q_i} \geq 1-\frac{1}{p_i}$. We also know that $ 1-\frac{1}{p_j} \leq 1$ for $j \leq r_0$, and together with an alternate formula for $\phi$, we see
\[ \phi(m) = m \prod_{i=1}^k \left( 1 - \frac{1}{q_i} \right) \geq m \prod_{i=1}^k \left( 1 - \frac{1}{p_i} \right) \geq m \prod_{i=1}^{r_0} \left( 1 - \frac{1}{p_i} \right) > N \prod_{i=1}^{r_0} \left( 1 - \frac{1}{p_i} \right) = \phi(N).\]

Since $\omega(m) \leq \omega(N)$ and we have already seen above that $ab \cdot 2^{\omega(N)} < \phi(N)$, it follows that
\[  ab \cdot 2^{\omega(m)} \leq ab \cdot 2^{\omega(N)} < \phi(N) < \phi(m). \]
This completes the second case, and so we may conclude that for all $m \geq N$, we have $ ab \cdot 2^{\omega(m)} < \phi(m)$. 
\end{proof}

\begin{thm}
\label{thm: ta-da}
For $a,b > 0$ with $\gcd(a,b)=1$, we have $g(a,b) < \infty$.
\end{thm}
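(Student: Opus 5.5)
By Theorem \ref{thm: big buddy}, it suffices to show that every sufficiently large $n$ can be written as $n = ax+by$ with $x,y \in \mathbb{Z}_{\geq 0}$ and $\gcd(x,y)=1$. Since $\gcd(a,b)=1$, fix one integer solution $(x_0,y_0)$ of $ax_0+by_0=n$. Every integer solution then has the form $x = x_0 + bt$, $y = y_0 - at$ with $t \in \mathbb{Z}$. The nonnegative solutions correspond to $t$ in an interval of length $n/(ab)$, so there are roughly $n/(ab)$ of them; for large $n$ there are at least $n/(ab) - 1$ consecutive admissible values of $t$. I plan to show that among these there is some $t$ with $\gcd(x_0+bt, y_0-at)=1$, using a sieve over the primes dividing $n$.

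The key observation concerns which primes can divide both coordinates. If a prime $p$ divides both $x=x_0+bt$ and $y=y_0-at$, then $p \mid ax+by = n$. So only the $\omega(n)$ primes dividing $n$ matter. For each such $p$, I will count the residues of $t$ mod $p$ that are bad, i.e.\ with $p \mid x$ and $p \mid y$.

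Take $p \mid n$. If $p \nmid b$, the condition $p \mid x_0+bt$ pins $t$ to a single residue class mod $p$. On that class, $by \equiv by_0 - abt \equiv by_0 + ax_0 = n \equiv 0$, so $p \mid y$ automatically. If $p \mid b$, then $p \nmid a$ and the symmetric argument applies. Either way, exactly one residue class of $t$ mod $p$ is bad.

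Let $P=\operatorname{rad}(n)$. By the Chinese remainder theorem, the good $t$ modulo $P$ form $\prod_{p\mid n}(p-1) = \phi(P)$ classes. A standard inclusion--exclusion (Legendre sieve) count over an interval of $L$ consecutive integers gives at least
\[ L\prod_{p\mid n}\left(1-\tfrac1p\right) - 2^{\omega(n)} \]
good values of $t$. The error term is at most $1$ for each squarefree divisor of $P$, hence $2^{\omega(n)}$ in total. Taking $L \ge n/(ab)-1$, the main term is
\[ \left(\tfrac{n}{ab}-1\right)\tfrac{\phi(n)}{n} = \tfrac{\phi(n)}{ab} - \tfrac{\phi(n)}{n}. \]
It therefore suffices that $\phi(n) > ab\,(2^{\omega(n)}+1)$, or some similar inequality with the constants adjusted.

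This is exactly where Lemma \ref{lem: satan's little helper} enters. It was proved for any pair of positive integers, so I will apply it with $a,b$ replaced by constants such as $2a, b$ to absorb the $+1$ and the slack from the interval endpoints. It gives $\phi(n) > 2ab\cdot 2^{\omega(n)} \ge ab(2^{\omega(n)}+1)$ for all $n$ past an explicit threshold of the form $p_1\cdots p_{r}$. For such $n$ at least one good $t$ exists, and then $n \in S_{(a,b)}$. Hence $g(a,b)$ is at most that threshold and in particular finite.

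The main obstacle is making the sieve count rigorous with correct endpoint and error accounting, so that the required inequality matches the form $ab\cdot 2^{\omega(n)} < \phi(n)$ of Lemma \ref{lem: satan's little helper}, perhaps with a modified constant. I would first try the cleanest route: count the nonnegative solutions as an interval of $t$ of length $L$ with $L \ge n/(ab) - 1$, bound each inclusion--exclusion error by $1$, and adjust constants until the lemma applies directly.
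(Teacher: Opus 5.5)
Your proof is correct and is essentially the paper's argument: the paper runs the same inclusion--exclusion as $\sum_{d\mid \gcd(x,y)}\mu(d)$, and the solutions with $d\mid x$, $d\mid y$ are exactly the solutions of $au+bv=n/d$ (your single residue class of $t$ modulo $d$); this gives at least $\phi(n)/(ab)-2^{\omega(n)}$ coprime solutions, and the paper then applies Lemma~\ref{lem: satan's little helper}. The only difference is that the paper counts each $d$-term directly as $n/(dab)+E$ with $|E|\le 1$, rather than first rounding the $t$-interval down to $L\ge n/(ab)-1$ points, so it needs only $\phi(n)>ab\,2^{\omega(n)}$ and gets the explicit bound $g(a,b)<N(a,b)$ used later; your version needs the lemma for $(2a,b)$, which is harmless for finiteness.
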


\begin{proof}
Let $a,b \in \mathbb{Z}_{\geq 0}$ with $\gcd(a,b)=1$ be fixed. We show that for sufficiently large $n$,
\[ \left\lvert \left\{ \left(x,y\right) \in \mathbb{Z}_{\geq 0} \times \mathbb{Z}_{\geq 0} : ax+by=n \ \& \ \gcd(x,y)=1 \right\} \right\rvert \geq 1; \]
in particular, the set is non-empty. 

For $\mu\left(d\right)$ the Möbius function, recall $\sum_{d|n} \mu\left(d\right) = 1$ for $n=1$ and 0 otherwise. Then we may count $ \left\lvert \left\{ \left(x,y\right) \in \mathbb{Z}_{\geq 0} \times \mathbb{Z}_{\geq 0} : ax+by=n \ \& \ \gcd(x,y)=1 \right\} \right\rvert$ as
\begin{align*}
     & \hspace{0.19in} \sum_{\substack{x,y \geq 0 \\ ax+by=n}}  \hspace{0.15in} \sum_{d|\gcd(x,y)} \hspace{0.1in} \mu\left(d\right) \\
    &=  \sum_{\substack{x,y \geq 0 \\ ax+by=n}}  \hspace{0.31in} \sum_{d|x, d|y} \hspace{0.26in}  \mu\left(d\right)\\
    &=  \hspace{0.13in} \sum_{d \geq 1} \hspace{0.33in} \sum_{\substack{x,y \geq 0 \\ ax+by=n \\ d|x, d|y}} \hspace{0.13in} \mu\left(d\right)\\
    &=  \hspace{0.13in} \sum_{d \geq 1} \mu\left(d\right)  \sum_{\substack{x,y \geq 0 \\ ax+by=n \\ d|x, d|y}} \hspace{0.2in} 1.
\end{align*}

\noindent Upon setting $x=du, y=dv$ for $u,v \in \mathbb{Z}_{\geq 0}$, we obtain 
\begin{align*}
    \sum_{d \geq 1} \mu\left(d\right) \sum_{\substack{x,y \geq 0 \\ ax+by=n \\ d|x, d|y}} 1 &= \sum_{d \geq 1} \mu\left(d\right) \sum_{\substack{du,dv \geq 0 \\ adu+bdv=n }} 1\\
    &= \sum_{d \geq 1} \mu\left(d\right) \left\lvert \left\{\left(u,v\right) \in \mathbb{Z}_{\geq 0} \times \mathbb{Z}_{\geq 0} : adu+bdv=n \right\}\right)\rvert\\
    &= \sum_{d | n} \mu\left(d\right) \left\lvert \left\{\left(u,v\right) \in \mathbb{Z}_{\geq 0} \times \mathbb{Z}_{\geq 0} : au+bv=\frac{n}{d} \right\}\right\rvert.
\end{align*}

We now turn our attention to $\left\lvert \left\{\left(u,v\right) \in \mathbb{Z}_{\geq 0} \times \mathbb{Z}_{\geq 0} : au+bv=\frac{n}{d} \right\}\right\rvert$. Because we know that $\gcd(a,b)=1$ and $1 \mid \frac{n}{d}$ for all divisors $d$ of $n$, it follows that there always exists some solution to $au_0+bv_0=\frac{n}{d}$ for integers $u_0, v_0$ (not necessarily non-negative). All other integer solutions must be of the form $u=u_0+kb, v = v_0-ka$ for some $k \in \mathbb{Z}$. Since we want to count only those solutions for which $u,v \geq 0$, we count only those integers $k$ for which $u_0+kb \geq 0$ and $v_0-ka \geq 0$. Solving each for $k$, we obtain $\displaystyle \frac{-u_0}{b} \leq k$ and $k \leq \frac{v_0}{a}$. The number of integers $k$ satisfying both inequalities is then
\begin{align*}
    \Big\lfloor \frac{v_0}{a} \Big\rfloor - \Big\lceil \frac{-u_0}{b} \Big\rceil +1 &= \Big\lfloor \frac{v_0}{a} \Big\rfloor + \Big\lfloor \frac{u_0}{b} \Big\rfloor +1\\
    &= \frac{v_0}{a} - \left\{ \frac{v_0}{a} \right\} + \frac{u_0}{b} - \left\{ \frac{u_0}{b} \right\}    +1  \\
    &= \frac{au_0+bv_0}{ab} - \left\{ \frac{u_0}{b} \right\}  - \left\{ \frac{v_0}{a} \right\} +1 \\
    &= \frac{n}{dab} - \left\{ \frac{u_0}{b} \right\}  - \left\{ \frac{v_0}{a} \right\} +1.
\end{align*}

By definition, $\{x\}$, the fractional part of any number $x$, always satisfies $\{x\} \in [0,1)$, so
\[ 1 - \left\{ \frac{u_0}{b} \right\}  - \left\{ \frac{v_0}{a} \right\} \leq 1 -0-0=1 \]
and
\[ 1 - \left\{ \frac{u_0}{b} \right\}  - \left\{ \frac{v_0}{a} \right\} > 1 -1-1=-1. \]
It follows that
\[ \left\lvert \left\{\left(u,v\right) \in \mathbb{Z}_{\geq 0} \times \mathbb{Z}_{\geq 0} : au+bv=\frac{n}{d}  \right\} \right\rvert = \frac{n}{dab}+E\left(\frac{n}{d}\right)\] 
where $E\left(\frac{n}{d}\right)$ represents an error as a function of $n, d$ and $\left\lvert E\left(\frac{n}{d}\right) \right\rvert \leq 1$ for all $n, d$. We are justified in treating the error as a function of $\frac{n}{d}$ because $a,b$ are fixed, and $u_0, v_0$ depend on $a,b,\frac{n}{d}$.
With this result, we may write
\begin{align*}
 & \ \ \ \ \sum_{d | n} \mu\left(d\right) \left\lvert \left\{\left(u,v\right) \in \mathbb{Z}_{\geq 0} \times \mathbb{Z}_{\geq 0} : au+bv=\frac{n}{d}  \right\} \right\rvert \\
   &= \sum_{d | n} \mu\left(d\right) \left( \frac{n}{dab}+E\left(\frac{n}{d}\right) \right)\\
    &= \sum_{d | n} \mu\left(d\right)  \frac{n}{dab}+ \sum_{d | n} \mu\left(d\right) E\left(\frac{n}{d}\right).
    \end{align*}

The first sum simplifies nicely using an analytic number theory identity, given as Theorem 2.3 in \cite[pg. 26-27]{apostol}:

\[ \sum_{d | n} \mu\left(d\right)  \frac{n}{dab} = \frac{1}{ab} \left( \sum_{d | n} \mu\left(d\right)  \frac{n}{d} \right) = \frac{1}{ab} \left( \phi(n) \right) = \frac{\phi(n)}{ab}.\]

Using the fact that $\left\lvert E\left(\frac{n}{d}\right)\right\rvert\leq1$, we can bound the second sum:

\[ \sum_{d | n} \mu\left(d\right) E\left(\frac{n}{d}\right) \leq \left\lvert \sum_{d | n} \mu\left(d\right) E\left(\frac{n}{d}\right) \right\rvert \leq \sum_{d | n} \left\lvert \mu\left(d\right)\right\rvert \left\lvert E\left(\frac{n}{d}\right)\right\rvert \leq \sum_{d | n} \left\lvert \mu\left(d\right) \right\rvert = 2^{\omega(n)}.\]
This last equality follows from the fact that $\left\lvert \mu\left(d\right) \right\rvert=1$ if and only if $d$ is a product of distinct prime factors, and $\left\lvert \mu\left(d\right) \right\rvert=0$ otherwise. Then
\[\sum_{d | n} \mu\left(d\right) E\left(\frac{n}{d}\right) \geq -\left\lvert \sum_{d | n} \mu\left(d\right) E\left(\frac{n}{d}\right) \right\rvert\geq  -2^{\omega(n)} \]
and so
\[\sum_{d | n} \mu\left(d\right)  \frac{n}{dab}+ \sum_{d | n} \mu\left(d\right) E\left(\frac{n}{d} \right) \geq  \frac{\phi(n)}{ab} - 2^{\omega(n)}.\]
This shows that
\[ \left\lvert \left\{ \left(x,y\right) \in \mathbb{Z}_{\geq 0} \times \mathbb{Z}_{\geq 0} : ax+by=n \ \& \ \gcd(x,y)=1 \right\} \right\rvert \geq \frac{\phi(n)}{ab} - 2^{\omega(n)} \]
and by Lemma \ref{lem: satan's little helper}, for sufficiently large $n$, 
\[  \phi(n) > ab \cdot 2^{\omega(n)}  \iff \frac{\phi(n)}{ab} > 2^{\omega(n)}  \iff \frac{\phi(n)}{ab} - 2^{\omega(n)} > 0 . \]
Then for sufficiently large $n$,
\[ \left\lvert \left\{ \left(x,y\right) \in \mathbb{Z}_{\geq 0} \times \mathbb{Z}_{\geq 0} : ax+by=n \ \& \ \gcd(x,y)=1 \right\} \right\rvert \geq \frac{\phi(n)}{ab} - 2^{\omega(n)} > 0 \]
and since $\left\lvert \left\{ \left(x,y\right) \in \mathbb{Z}_{\geq 0} \times \mathbb{Z}_{\geq 0} : ax+by=n \ \& \ \gcd(x,y)=1 \right\} \right\rvert$ is necessarily an integer, it follows that for sufficiently large $n$,
\[ \left\lvert \left\{ \left(x,y\right) \in \mathbb{Z}_{\geq 0} \times \mathbb{Z}_{\geq 0} : ax+by=n \ \& \ \gcd(x,y)=1 \right\} \right\rvert \geq 1. \]
Then there exists $N \in \mathbb{Z}$ such that for all $n \geq N$, there is at least one solution to $ax+by=n$ such that $x,y \in \mathbb{Z}_{\geq 0}$ and $\gcd(x,y)=1$; hence, $g(a,b) < N$, which concludes the proof.
\end{proof}

The above lemma and theorem also yield an upper bound for $g(a,b)$: define $N(a,b)$ to be $p_1p_2 \cdots p_r$ for the minimum $r$ satisfying $ 2ab < \frac{p_3-1}{2} \cdot \frac{p_4-1}{2} \cdots \frac{p_r-1}{2} $. By the above, this $N$ has the property that if $n \geq N$, then $\frac{\phi(n)}{ab} - 2^{\omega(n)} > 0$, and hence $g(a,b) < N(a,b)$. We now have enough tools to begin computing exact values of $g(a,b)$.

\section{Exact Values}
\label{sec: Exact Values}

We first note that we can solve for $g(a,b)$ precisely if $a=1$ or $a=2$.

\begin{prop}
\label{prop: onesies}
We have $g(1,b)=b-1$ if $b \neq 2$ and $g(1,2)=0$.
\end{prop}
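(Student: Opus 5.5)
We need to show that for $a=1$ and $b\geq 1$ (with $b\neq 2$), every $n\geq b$ can be written as $x+by$ with $x,y\geq 0$ and $\gcd(x,y)=1$, while $b-1$ cannot. For $g(1,2)$ we show every $n\geq 1$ is representable. By Theorem \ref{thm: big buddy} we may work directly with coprime non-negative coefficients rather than with $S_{(1,b)}$.

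\emph{Non-representability of $b-1$.} Suppose $b-1 = x + by$ with $x,y \geq 0$. Since $b-1<b$, we must have $y=0$ and $x=b-1$. Then $\gcd(x,y)=\gcd(b-1,0)=b-1$, which equals $1$ exactly when $b=2$. So for $b\neq 2$ (and $b\geq 3$), $b-1$ is not representable. The case $b=1$ needs separate care: there $g(1,1)$ should be $0$, and $0=x+y$ forces $x=y=0$ with $\gcd(0,0)=0\neq 1$. So $0$ is indeed inexpressible and the formula $b-1=0$ still holds. For $b=2$, $b-1=1=1+2\cdot 0$ is representable, and $0$ is not, which is consistent with $g(1,2)=0$.

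\emph{Representability of all $n\geq b$ (and all $n\geq 1$ when $b=2$).} Take $y=1$ and $x=n-b\geq 0$. Then $\gcd(n-b,1)=1$, so $n = (n-b)+b\cdot 1$ is a valid representation, and every $n\geq b$ is representable. When $b=2$, this covers all $n\geq 2$, and $n=1=1+2\cdot 0$ was handled above. Hence every positive integer is representable and $g(1,2)=0$.

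The only real subtlety, and the step most likely to need care, is the edge case $b=1$ (and the convention $\gcd(0,0)\neq 1$), together with checking that $\sup$ over the complement is taken among non-negative integers, so that $0$ counts as the largest inexpressible value when appropriate. Otherwise the argument is a direct two-line check.
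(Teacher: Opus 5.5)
Your proof is correct and is the direct verification the paper alludes to (it gives no written proof). Your representation $n=(n-b)+b\cdot 1$ is exactly the array entry $Z(n-b,1;1,b)=s(2^{n-b}-1)+s(1)\,b$. The forced $y=0$ for $n<b$, together with $\gcd(n,0)=n$ and $\gcd(0,0)=0$, rules out $b-1$ (for $b\ge 3$) and $0$. Your explicit treatment of $b=1$ and $b=2$ covers edge cases the paper leaves implicit.
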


This is easily proven directly, but also follows intuitively from writing the first several rows of $Z(1,b)$.

\begin{prop}
If $b>1$ is odd, then $g(2,b) = 4b-2$.
\label{prop: twosies}
\end{prop}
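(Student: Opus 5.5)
By Theorem \ref{thm: big buddy}, $g(2,b)$ is the largest $n$ that cannot be written as $n=2x+by$ with $x,y\ge 0$ and $\gcd(x,y)=1$. The plan has two halves. First, show that $n=4b-2$ has no such representation. Second, show that every $n>4b-2$ has one, by sorting $n$ according to the parity of $y$ and giving an explicit choice of $x,y$ in each case.

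\emph{Step 1: $4b-2$ is not representable.} Suppose $2x+by=4b-2$. Since $b$ is odd, $y$ must be even, and $by\le 4b-2$ forces $y\in\{0,2\}$.
\begin{itemize}
\item If $y=0$, then $x=2b-1$ and $\gcd(x,0)=x>1$, because $b>1$.
\item If $y=2$, then $x=b-1$, which is even since $b$ is odd.
\end{itemize}
Either $x$ is even and at least $2$, so $\gcd(x,2)=2$, or $\gcd(x,y)=x>1$. In both cases $\gcd(x,y)\neq 1$, so $4b-2$ is not representable.

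\emph{Step 2: even $n\ge 4b$.} Here $y$ must be even, so $x$ must be odd, and $y=0$ is usable only when $x=1$. I will try $y=2$, $x=(n-2b)/2$ first, and fall back on $y=4$, $x=(n-4b)/2$.
\begin{itemize}
\item If $(n-2b)/2$ is odd, then $\gcd\bigl((n-2b)/2,\,2\bigr)=1$ and we are done with $y=2$.
\item Otherwise $(n-2b)/2$ is even. Then $(n-4b)/2=(n-2b)/2-b$ is odd because $b$ is odd, and it is nonnegative because $n\ge 4b$. Hence $\gcd\bigl((n-4b)/2,\,4\bigr)=1$.
\end{itemize}
The only delicate boundary case is $n=4b$ itself. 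There $(4b-2b)/2=b$ is odd, so the first branch applies and $x=b$, $y=2$ works.

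\emph{Step 3: odd $n>4b-2$.} Now $y$ must be odd; I will take $y=1$ and $x=(n-b)/2\ge 0$. Since $\gcd(x,1)=1$, this always works, and it in fact handles every odd $n\ge b$.

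Combining the steps: every $n\ge 4b-1$ is representable while $4b-2$ is not, so $g(2,b)=4b-2$. The main obstacle is Step 2. The point is that the two candidates $y=2$ and $y=4$ have $x$-values of opposite parity precisely because $b$ is odd, so one of them always gives an odd $x$. The work is in checking that the $y=4$ candidate stays nonnegative for all $n\ge 4b$, and that $n=4b$ really falls into the $y=2$ branch.
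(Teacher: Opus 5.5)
Your proof is correct and takes essentially the same route as the paper. You rule out $4b-2$ by the same case analysis on $y$, and you use the same witnesses: $y=1$ for odd $n$, and $y=2$ or $y=4$ for even $n$. Your split on the parity of $(n-2b)/2$ is exactly the paper's split of $k=n-4b$ modulo $4$, and you write out explicitly the even cases that the paper dismisses as ``similar arguments.''
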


\begin{proof}
We first claim $4b-2 \notin S_{\left(2,b\right)}$ for any $b \in \mathbb{Z}_{\geq 0}$. Suppose for contradiction that $4b-2 \in S_{\left(2,b\right)}$. It follows from Theorem \ref{thm: big buddy} that $4b-2=2x+by$ for some $x,y \in \mathbb{Z}_{\geq 0}$ such that $\gcd(x,y)=1$. We divide into four cases based on the possible values of $y$.

Case 1: $y$ is odd. Then $2x+by$ is odd, but since $4b-2$ is even, this contradicts $4b-2=2x+by$.

Case 2: $y=2$. Then $4b-2 = 2x+2b \implies x=b-1$, which implies $x$ is even. This contradicts $\gcd(x,y)=1$.

Case 3: $y \geq 4$. Then $2x+by \geq 2x+4b \geq 4b > 4b-2$; this contradicts $4b-2=2x+by$.

Case 4: $y=0$. Then 
\[ 4b-2=2x \implies x=2b-1 \implies 1= \gcd(x,y) = \gcd(2b-1,0)=2b-1 \implies b=1. \]
This contradicts the assumption that $b > 1$.

These cases account for all possible values of $y$, so it follows that there is no solution to $4b-2=2x+by$ such that $x,y \in \mathbb{Z}_{\geq 0}$ and $\gcd(x,y)=1$. Then $g(2,b) \geq 4b-2$ for $b>1$ odd.

We now claim that $g(2,b) < 4b-1$ by arguing that for all $n \geq 4b-1$, we have $n \in S_{\left(2,b\right)}$.

First, $4b-1 = b + 3b-1$ where $3b-1$ is even, so $4b-1=2 \cdot \frac{3b-1}{2} + b \cdot 1$ where $\frac{3b-1}{2}, 1 \in \mathbb{Z}_{\geq 0}$ and $\gcd \left(\frac{3b-1}{2},1\right)=1$. Then $4b-1 \in S_{\left(2,b\right)}$. Next, $4b = 2b+2b=2 \cdot b + b \cdot 2$ where $b,2 \in \mathbb{Z}_{\geq 0}$ and $\gcd(b,2)=1$ because $b$ is odd, so $4b \in S_{\left(2,b\right)}$ as well.

We now consider $4b+k$ for $k \in \mathbb{Z}^+$. We divide again into cases, this time based on $k \mod 4$, but will only argue the first explicitly.

Case 1: $k \equiv 1 \mod 4$. Then $4b+k=b+3b+k$ where $3b+k$ is even because $3,b,k$ are odd, so $4b+k= 2 \cdot \frac{3b+k}{2} + b \cdot 1$ where $\frac{3b+k}{2}, 1 \in \mathbb{Z}_{\geq 0}$ and $\gcd (\frac{3b+k}{2},1)=1$.

Cases 2, 3, and 4, where $k \equiv 2,3, \text{ or } 0 \mod 4$ respectively, follow similar arguments.

Then for all $k \in \mathbb{Z}^+$, $4b+k$ is expressible as $2x+by$ where $x,y \in \mathbb{Z}_{\geq 0}$ and $\gcd(x,y)=1$. Therefore $g(2,b) < 4b-1$ for all odd $b>1$, and together with the above, this proves $g(2,b)=4b-2$ for all odd $b>1$.
\end{proof}

We do not have closed formulae for $g(a,b)$ outside of these cases, but can still compute exact values by testing down from the upper bound defined in the previous section. We define the function UpperBound in Mathematica, with exact code given in Figure \ref{fig:code}. Given a choice of $a,b$, UpperBound first finds those $r$, beginning at $r=3$, for which $ 2ab \geq \frac{p_3-1}{2} \cdot \frac{p_4-1}{2} \cdots \frac{p_r-1}{2} $; we know that at most finitely many of these $r$ exist, and that the right-hand side of this inequality is strictly increasing as a function of $r$. If we then isolate the last $r$ for which the inequality holds and add one, we obtain the minimum $r$ satisfying $ 2ab < \frac{p_3-1}{2} \cdot \frac{p_4-1}{2} \cdots \frac{p_r-1}{2} $, at which point we set UpperBound$\left(a,b\right)$ equal to $p_1p_2 \cdots p_r$ for this $r$. This returns $N(a,b)$ precisely.

\begin{figure}
	\centering
	\includegraphics[width=6.5in]{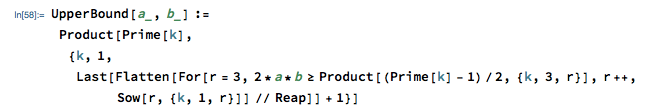}
	\caption{The Mathematica code of UpperBound, which computes the value $N(a,b)$.}
	\label{fig:code}
\end{figure}

We now wish to compare our upper bound to our known exact values. We note that by Proposition \ref{prop: onesies}, $g(1,999)=998$; by Proposition \ref{prop: twosies}, $g(2,999)=3994$; and that by direct computation, $N(1,999)=N(2,999)=9699690$. It is natural to ask next, is this the best possible upper bound, or can we build a better one?

\section{Computing an Alternate Upper Bound}
\label{sec: farey}
To answer this question, we return to the original problem posed in terms of the Farey sequence. We first show that for $\gcd(a,b)=1$, $a$ and $b$ will appear as neighboring denominators in some Farey sequence.

\begin{prop}
\label{prop: fareyexists}
For any choice of $a,b \geq 2$ such that $\gcd(a,b)=1$, there exist some $s,t$ such that $\frac{t}{a}, \frac{s}{b}$ are consecutive terms in some Farey sequence.
\end{prop}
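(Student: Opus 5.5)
The plan is to build the fractions directly from a Bézout identity and then invoke the standard characterization of Farey neighbours. Since $\gcd(a,b)=1$, there are integers $s_0,t_0$ with $as_0-bt_0=1$. The solutions form the family $s=s_0+kb$, $t=t_0+ka$ for $k\in\mathbb{Z}$. Reducing modulo $b$, choose $s$ with $0\le s<b$; in fact $1\le s\le b-1$, because $as\equiv 1\pmod b$ and $b\ge 2$ force $s\not\equiv 0$. Then $t=(as-1)/b$ is determined.

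Next I will check the range and reducedness conditions. From $1\le s\le b-1$ we get
\[ t=\frac{as-1}{b}\ge \frac{a-1}{b}>-1, \]
so $t\ge 0$, and
\[ t<\frac{as}{b}<a, \]
so $0\le t\le a-1$. Hence both $\frac{t}{a}$ and $\frac{s}{b}$ lie in $[0,1)$. The relation $as-bt=1$ gives $\gcd(t,a)=1$ and $\gcd(s,b)=1$, so both fractions are in lowest terms. One small point needs care: if $t=0$, then $\frac{t}{a}=\frac{0}{a}$ is reduced only when $a=1$, and this is excluded by $a\ge 2$. I will handle this by noting that $t=0$ forces $as=1$, which is impossible for $a\ge 2$, so in fact $t\ge 1$.

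Finally, I will show that the two fractions are consecutive in $F_n$ for $n=\max(a,b)$. Since $as-bt=1>0$, we have $\frac{t}{a}<\frac{s}{b}$. Suppose some reduced $\frac{p}{q}$ lay strictly between them. Then $pa-qt\ge 1$ and $sq-pb\ge 1$, and so
\[ q=q(as-bt)=b(pa-qt)+a(sq-pb)\ge a+b>n. \]
Thus no fraction of denominator at most $n$ lies strictly between them, and they are adjacent in $F_{\max(a,b)}$. This is exactly the classical fact the paper recalls: neighbours satisfy $as-bt=1$ and the first intermediate fraction is the mediant, with denominator $a+b$.

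I expect the main obstacle to be only bookkeeping rather than any real difficulty: making sure the chosen representative gives numerators in the correct range, so that both fractions lie in $[0,1]$ and are reduced. The hypothesis $a,b\ge 2$ is used exactly there, to exclude the degenerate endpoints $\frac{0}{1}$ and $\frac{1}{1}$.
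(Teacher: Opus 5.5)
Your proposal is correct and follows essentially the same route as the paper: take $s\in[1,b-1]$ with $as\equiv 1 \pmod{b}$, set $t=(as-1)/b$, and get the ranges and reducedness from $as-bt=1$. The only differences are that you prove adjacency in $F_{\max(a,b)}$ directly via $q=b(pa-qt)+a(sq-pb)\ge a+b$, where the paper cites a reference for it, and that you justify $t\ge 1$, which the paper asserts without argument.
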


\begin{proof}
Let $\gcd(a,b)=1$ with $a,b \geq 2$. Because $a,b$ are relatively prime, there necessarily exists some $s$ satisfying $as \equiv 1 \mod b$ and $1 \leq s \leq b-1$. Let $t=(as-1)/b$; then $t \geq 1$, and \[bt=as-1 \leq a(b-1)-1 = ab-a-1 < ab, \]
so $t < a$. Therefore $\frac{t}{a}, \frac{s}{b} \in (0,1)$. Additionally, it follows from $t=(as-1)/b$ that $as-bt=1$ and thus that $\gcd(a,t)=\gcd(b,s)=1$, so $\frac{t}{a}, \frac{s}{b}$ are irreducible. It also follows from  $as-bt=1$ that $\frac{s}{b}-\frac{t}{a}=\frac{1}{ab}$ and thus that $\frac{t}{a}, \frac{s}{b}$ are consecutive terms in $F_{\max(a,b)}:=F_k$ by \cite{ta2}.
\end{proof}

We now consider the denominators that appear in terms between $\frac{t}{a}, \frac{s}{b}$ in Farey sequence of sufficiently high order.

\begin{prop}
\label{prop: fairydemons}
If $\frac{t}{a}, \frac{s}{b}$ are consecutive terms in the Farey sequence of order $k:=\max(a,b)$, then $d$ appears as a denominator between $ \frac{t}{a}, \frac{s}{b}$ in some $F_{m}$ with $m>k$ if and only if $d=ax+by$ for some $x,y \in \mathbb{N}$ with $\gcd(x,y)=1$.
\end{prop}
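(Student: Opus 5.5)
The plan is to reduce everything to the unimodular change of coordinates given by the matrix with rows $(t,s)$ and $(a,b)$. Consecutiveness gives $as-bt=1$, so this matrix has determinant $1$. Any rational strictly between $\frac{t}{a}$ and $\frac{s}{b}$ in lowest terms, say $\frac{c}{d}$, can then be written uniquely as
\[ c = tx+sy, \qquad d = ax+by, \]
with $x,y\in\mathbb{Z}$. Solving the linear system gives $x = bc-sd$ and $y = td-ac$ up to sign. The sign conventions are chosen so that the inequalities $\frac{t}{a}<\frac{c}{d}$ and $\frac{c}{d}<\frac{s}{b}$ become $y>0$ and $x>0$ respectively; concretely, $ad-tc$ and $sd-bc$ have definite signs.

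For the forward direction, suppose $\frac{c}{d}$ is a fraction of $F_m$ lying strictly between $\frac{t}{a}$ and $\frac{s}{b}$. The linear system above produces integers $x,y$ with $d=ax+by$, and the sign analysis forces $x,y\geq 1$, so $x,y\in\mathbb{N}$. For coprimality, suppose $e\mid x$ and $e\mid y$. Then $e$ divides both $c=tx+sy$ and $d=ax+by$. Since $\frac{c}{d}$ is irreducible, $e=1$.

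For the converse, start from $x,y\in\mathbb{N}$ with $\gcd(x,y)=1$ and set $c=tx+sy$ and $d=ax+by$. By unimodularity, $\gcd(c,d)=\gcd(x,y)=1$, because the inverse matrix is also integral. The fraction $\frac{c}{d}$ is a mediant-type weighted average of $\frac{t}{a}$ and $\frac{s}{b}$ with positive weights, so it lies strictly between them. It therefore lies in $(0,1)$ and appears in $F_m$ for $m=d>k$; here $d\geq a+b>k$. The edge case in which $\frac{t}{a}=\frac{0}{1}$ or $\frac{s}{b}=\frac{1}{1}$ is handled by the same algebra.

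The main obstacle I expect is not the algebra but interpreting ``$\mathbb{N}$'' correctly. Between the endpoints one needs strictly positive $x,y$, whereas Theorem \ref{thm: big buddy} allows zeros, which correspond to the endpoints $a$ and $b$ themselves. I would state that $\mathbb{N}$ means positive integers here and note that the cases $x=0$ or $y=0$ recover exactly $d=b$ or $d=a$, which reconciles the statement with $S_{(a,b)}$. Alternatively, one could invoke Eisenstein's claim via Theorem \ref{thm: big buddy} and the Stern–Brocot structure of repeated mediants, but the direct matrix argument is cleaner and self-contained.
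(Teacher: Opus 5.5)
Your argument is correct and is essentially the paper's: your integral-inverse step ($x=sd-bc$, $y=ac-td$, so any common divisor of $c=tx+sy$ and $d=ax+by$ divides $x$ and $y$) is exactly what underlies the paper's B\'ezout identity $(ak-bj)(tx+sy)+(js-kt)(ax+by)=jx+ky=1$. Your solution of the linear system simply proves the forward representation that the paper cites from Sang, and your reading of $\mathbb{N}$ as the positive integers is the one the paper's strict inequalities also require; the only slips are cosmetic, namely that with rows $(t,s)$ and $(a,b)$ the determinant is $bt-as=-1$ (still unimodular) and the sign quantity you wrote as $ad-tc$ should be $ac-td$.
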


\begin{proof}
Let $\frac{t}{a}, \frac{s}{b}$ be consecutive terms in the Farey sequence of order $k:=\max(a,b)$.

$(\implies)$ This direction was proved by Edward Sang in 1879 \cite{sang}. Essentially, every term between $\frac{t}{a}, \frac{s}{b}$ must be of the form $\frac{tx+sy}{ax+by}$ for $x,y \in \mathbb{N}$, and if $\gcd(x,y) > 1$, we find $\frac{tx+sy}{ax+by}$ is not in lowest terms. Thus every possible denominator $d$ appearing between $\frac{t}{a}, \frac{s}{b}$ must be of the form $ax+by$ with $\gcd(x,y)=1$.

$(\impliedby)$ Let $x,y \in \mathbb{N}$ with $\gcd(x,y)=1$ and $d=ax+by$. Since $\gcd(x,y)=1$, there exist $j,k \in \mathbb{Z}$ such that $jx+ky=1$. We know that ``if $ \frac{h}{k}$ and $ \frac{h'}{k'}$ are two successive terms of $F_n$, then $kh'-hk'=1$'' \cite{hardy}, so $as-bt=1$. Observe $(ak-bj),(js-kt) \in \mathbb{Z}$ and that
\begin{align*}
    & (ak-bj)(tx+sy)+(js-kt)(ax+by)\\
    = \ & aktx +aksy -bjtx - bjsy +ajsx +bjsy - aktx - bkty\\
    = \ & ajsx -bjtx + aksy - bkty  \\
    = \ & jx(as-bt) + ky(as-bt)\\
    = \ & jx+ky\\
    = \ & 1,
\end{align*}
and it follows that $\gcd(tx+sy,ax+by)=1$. Furthermore, it follows from $as-bt=1$ that $bt < as$, and so
\[ btx < asx \implies btx+bsy < asx+bsy \implies b(tx+sy) < s(ax+by) \implies \frac{tx+sy}{ax+by} < \frac{s}{b} \]
and
\[ bty < asy \implies atx+bty < atx+asy \implies t(ax+by) < a(tx+sy) \implies \frac{t}{a} <\frac{tx+sy}{ax+by}. \]
Therefore $\frac{tx+sy}{ax+by}=\frac{tx+sy}{d}$ appears somewhere between $\frac{t}{a}$ and $\frac{s}{b}$ in all Farey sequences of order at least $d$.
\end{proof}

\begin{cor}
\label{cor:immed1}
A number $d$ appears as a denominator between $\frac{t}{a}$ and  $\frac{s}{b}$ inclusive in some Farey sequence if and only if $d \in S_{\left(a,b\right)}$ as defined in Section \ref{sec: def,pr}.
\end{cor}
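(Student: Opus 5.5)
The plan is to combine Proposition \ref{prop: fairydemons} with Theorem \ref{thm: big buddy}. The only new feature is the word ``inclusive'', which adds the endpoints $a$ and $b$ themselves. Throughout, fix $\frac{t}{a},\frac{s}{b}$ consecutive in $F_{\max(a,b)}$, as supplied by Proposition \ref{prop: fareyexists}. We also interpret $\mathbb{N}$ in Proposition \ref{prop: fairydemons} as positive integers, since the interior terms $\frac{tx+sy}{ax+by}$ require $x,y\geq 1$.

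$(\implies)$ Suppose $d$ appears as a denominator between $\frac{t}{a}$ and $\frac{s}{b}$ inclusive in some Farey sequence. There are two cases.
\begin{enumerate}
\item If $d$ is an endpoint denominator, then $d=a=a\cdot 1+b\cdot 0$ or $d=b=a\cdot 0+b\cdot 1$. In both cases the coefficients are coprime, since $\gcd(1,0)=1$. Theorem \ref{thm: big buddy} then gives $d\in S_{(a,b)}$; alternatively, $a$ and $b$ already sit in row $0$ of $Z(a,b)$.
\item If $d$ is the denominator of a term strictly between the endpoints, it can only occur in some $F_m$ with $m>\max(a,b)$, because the endpoints are consecutive in $F_{\max(a,b)}$. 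The forward direction of Proposition \ref{prop: fairydemons} gives $d=ax+by$ with $x,y$ positive and $\gcd(x,y)=1$. Theorem \ref{thm: big buddy} then gives $d\in S_{(a,b)}$.
\end{enumerate}

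$(\impliedby)$ Let $d\in S_{(a,b)}$. By Theorem \ref{thm: big buddy}, $d=ax+by$ with $x,y\geq 0$ and $\gcd(x,y)=1$. There are again two cases.
\begin{enumerate}
\item If one coefficient is $0$, say $y=0$, then coprimality forces $x=\gcd(x,0)=1$, so $d=a$, an endpoint denominator. Similarly $x=0$ forces $d=b$. Both endpoints appear in $F_{\max(a,b)}$.
\item If $x,y\geq 1$, the reverse direction of Proposition \ref{prop: fairydemons} shows that $\frac{tx+sy}{d}$ is a reduced fraction lying strictly between $\frac{t}{a}$ and $\frac{s}{b}$ in every $F_m$ with $m\geq d$. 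So $d$ appears as a denominator there.
\end{enumerate}

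The only step needing care is the degenerate bookkeeping: coefficients equal to $0$ correspond exactly to the endpoints, which is why the statement says ``inclusive''. We should also note the hypothesis $a,b\geq 2$ in Proposition \ref{prop: fareyexists}. If $a=1$ or $b=1$, we would instead take the endpoints $\frac{0}{1},\frac{1}{b}$ (or $\frac{a-1}{a},\frac{1}{1}$), which satisfy $as-bt=1$. The argument of Proposition \ref{prop: fairydemons} uses only $as-bt=1$, so it goes through verbatim for these choices.
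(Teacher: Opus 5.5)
Your proposal is correct and follows the same route as the paper, whose proof is simply the one-line combination of Theorem~\ref{thm: big buddy} with Proposition~\ref{prop: fairydemons}. You also supply details the paper leaves implicit: zero coefficients correspond exactly to the endpoint denominators $a$ and $b$, interior terms occur only in $F_m$ with $m>\max(a,b)$, $\mathbb{N}$ is read as the positive integers, and you choose endpoints explicitly when $a=1$ or $b=1$.
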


\begin{proof}
This follows immediately from Theorem \ref{thm: big buddy} and Proposition \ref{prop: fairydemons}.
\end{proof}

\begin{cor}
\label{cor:immed2}
If all $n > N$ appear as denominators between $\frac{t}{a}$ and $\frac{s}{b}$ in some $F_{m}$ with $m>\max(a,b)$, then $g(a,b) \leq N$.
\end{cor}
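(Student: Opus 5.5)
The plan is to unwind the definition of $g(a,b)$ as $\sup\left(\mathbb{Z}\setminus S_{(a,b)}\right)$ and then show that every $n>N$ lies in $S_{(a,b)}$. Once that is done, no integer larger than $N$ can be inexpressible, so the supremum is at most $N$.

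First, fix $n>N$. By hypothesis, $n$ appears as a denominator of some fraction strictly between $\frac{t}{a}$ and $\frac{s}{b}$ in some Farey sequence $F_m$ with $m>\max(a,b)$. Here $\frac{t}{a},\frac{s}{b}$ are the consecutive terms of $F_{\max(a,b)}$ supplied by Proposition \ref{prop: fareyexists}. This is exactly the setting of the forward direction of Proposition \ref{prop: fairydemons}. That direction gives $n=ax+by$ for some $x,y\in\mathbb{N}$ with $\gcd(x,y)=1$.

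Next, apply Theorem \ref{thm: big buddy}, whose hypothesis only needs $x,y\in\mathbb{Z}_{\geq 0}$. It gives $n\in S_{(a,b)}$. Equivalently, Corollary \ref{cor:immed1} yields $n\in S_{(a,b)}$ in a single step. Since this holds for every $n>N$, the set $\mathbb{Z}\setminus S_{(a,b)}$ has no element exceeding $N$, and so $g(a,b)\leq N$.

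The argument is essentially bookkeeping, and the only point needing care is the set of $(a,b)$ to which the hypotheses apply. Proposition \ref{prop: fareyexists} requires $a,b\geq 2$, so the statement should be read in that setting, with $\frac{t}{a},\frac{s}{b}$ fixed by it. The cases $a=1$ or $b=1$ are already settled by Proposition \ref{prop: onesies} and need no Farey argument. I would also check that $\mathbb{N}$ in Proposition \ref{prop: fairydemons} is compatible with $\mathbb{Z}_{\geq 0}$ in Theorem \ref{thm: big buddy}. This is harmless: positive coefficients are in particular non-negative, so the implication goes in the direction we need.
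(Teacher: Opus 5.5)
Your proposal is correct and follows the paper's own argument: the paper also gets $n\in S_{(a,b)}$ for every $n>N$ from Corollary \ref{cor:immed1} (that is, Proposition \ref{prop: fairydemons} combined with Theorem \ref{thm: big buddy}), and then concludes from $g(a,b)=\sup\left(\mathbb{Z}\setminus S_{(a,b)}\right)$. Your side remarks are harmless: the $\mathbb{N}$ versus $\mathbb{Z}_{\geq 0}$ point runs in the direction you need, and the cases $a=1$ or $b=1$ fall outside the corollary's Farey setting.
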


\begin{proof}
This follows immediately from Corollary \ref{cor:immed1} and the fact that $g(a,b)=\sup\left(\mathbb{Z} \setminus S_{\left(a,b\right)}\right)$.
\end{proof}

We note that $n$ does not appear as a denominator between $ \frac{t}{a}, \frac{s}{b}$ in any Farey sequence if and only if for every $i$ satisfying $ \frac{t}{a} < \frac{i}{n} < \frac{s}{b}$, we have $\gcd(i,n)>1$. We further note
\[\frac{t}{a} < \frac{i}{n} < \frac{s}{b} \iff \frac{t}{a}n < i < \frac{s}{b}n,\]
and it follows that the number of $i$ that must satisfy $\gcd(i,n)>1$ in order for $n$ not to appear as a denominator between $ \frac{t}{a}, \frac{s}{b}$ is $\left\lfloor \frac{sn}{b} \right\rfloor - \left\lceil \frac{tn}{a} \right\rceil+1$. We observe
\begin{align*}
    & \left\lfloor \frac{sn}{b} \right\rfloor - \left\lceil \frac{tn}{a} \right\rceil + 1
    = \left\lfloor \frac{sn}{b} \right\rfloor + \left\lfloor -\frac{tn}{a} \right\rfloor + 1
    \geq \left\lfloor \frac{sn}{b} -\frac{tn}{a} \right\rfloor
    = \left\lfloor \frac{asn-btn}{ab} \right\rfloor =\\
     & \left\lfloor \frac{n(as-bt)}{ab} \right\rfloor
    =  \left\lfloor \frac{n(1)}{ab} \right\rfloor
    =  \left\lfloor \frac{n}{ab} \right\rfloor
    \geq \frac{n}{ab}-1.
\end{align*}
This expression has the benefit of not depending on $s,t$.

It now becomes expedient to invoke the Jacobsthal function $j(n)$, which is defined for $n \geq 1$ as the smallest integer such that any sequence of $j(n)$ consecutive integers contains an element relatively prime to $n$ \cite{hajdu}. Then $j(n)-1$ is the maximum length of a string of consecutive integers \emph{not} relatively prime to $n$, and if $j(n)-1 < \frac{n}{ab}-1$ or equivalently, $j(n) < \frac{n}{ab}$, it follows by the above that $n$ is a denominator appearing between $\frac{t}{a}, \frac{s}{b}$ in Farey sequences of sufficiently high order. For 
\[ h(r):=j(p_1p_2 \cdots p_r)\]
and 
\[H(r):=\max_{\omega(n)=r} j(n),\]
Hajdu and Saradha have shown $H(r)=h(r)$ for $r \leq 23$ and $H(24) > h(24)$ \cite{hajdu}. Additionally, $h(r)$ is strictly increasing for $r \leq 63$ \cite{oeis}. We note that for all cases we have considered thus far, $N(a,b) < p_1p_2 \cdots p_{23} = 267064515689275851355624017992790$.

We combine these facts with our previous upper bound $N(a,b)$ defined in Section \ref{sec: The Existence of g(a,b)} to obtain an improved upper bound:

\begin{thm}
\label{thm:fareyub}
For $N(a,b) \leq p_1p_2\cdots p_{23} $, we have $g(a,b) \leq ab \cdot j\left(N\left(a,b\right)\right) < N(a,b)$.
\end{thm}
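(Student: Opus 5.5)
Set $N=N(a,b)=p_1p_2\cdots p_R$, with $R$ minimal as in Section \ref{sec: The Existence of g(a,b)}. Let $J=j(N)$. The plan is to show that every $n>abJ$ lies in $S_{(a,b)}$, which gives $g(a,b)\le abJ$. Then we check separately that $abJ<N$. Throughout we assume $a,b\ge 2$, so that Proposition \ref{prop: fareyexists} supplies $\frac{t}{a},\frac{s}{b}$ consecutive in some Farey sequence. The cases $a=1$ or $b=1$ are covered by Proposition \ref{prop: onesies}, and the inequality is easily checked there.

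\textbf{Step 1 (reduction to Jacobsthal).} Fix $n>abJ$ and count the integers $i$ with $\frac{tn}{a}<i<\frac{sn}{b}$. The computation preceding the theorem gives
\[
\left\lfloor \frac{sn}{b}\right\rfloor-\left\lceil\frac{tn}{a}\right\rceil+1\ \ge\ \left\lfloor \frac{n}{ab}\right\rfloor .
\]
Since $n>abJ$ and $J$ is an integer, $\lfloor n/(ab)\rfloor\ge J$. Hence the interval contains at least $J$ consecutive integers. Because the endpoints $tn/a$ and $sn/b$ may themselves be integers, I will recheck that the strict interval really holds $J$ integers; if it does not, I will weaken the claim to $n>ab(J+1)$ or absorb the endpoint issue. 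If one of these $J$ consecutive integers is coprime to $n$, then $\frac{i}{n}$ is a reduced fraction strictly between $\frac ta$ and $\frac sb$. Corollary \ref{cor:immed1} then gives $n\in S_{(a,b)}$.

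\textbf{Step 2 (bounding $j(n)$ by $j(N)$).} We need $j(n)\le J$ for every such $n$. Write $r=\omega(n)$. First, $j(n)=j(\mathrm{rad}(n))\le H(r)$. Second, $j$ is monotone under divisibility, since being coprime to $n$ implies being coprime to any divisor. If $r\le R\le 23$, we use $H(r)=h(r)$ from Hajdu--Saradha together with the monotonicity of $h$ for $r\le 63$, giving $j(n)\le h(r)\le h(R)=j(N)$.

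The case $r>R$ is where I expect the main obstacle, because then $j(n)$ could exceed $J$. The plan here is to abandon the Jacobsthal argument and use the Möbius count from Theorem \ref{thm: ta-da} instead. Lemma \ref{lem: satan's little helper}, Case 1, shows that $\phi(n)>ab\,2^{\omega(n)}$ whenever $\omega(n)\ge R$, so $n\in S_{(a,b)}$ directly, independent of the size of $n$. Combining the two cases, every $n>abJ$ lies in $S_{(a,b)}$.

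\textbf{Step 3 ($abJ<N$).} The crude bound $j(m)\le 2^{\omega(m)}$ gives $abJ\le ab\,2^{R}$. By the defining minimality of $R$ and Lemma \ref{lem: ugh FINE}, $ab\,2^R<\phi(N)<N$. This yields the strict inequality and completes the theorem.
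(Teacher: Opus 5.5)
Your proof is correct. It uses the same ingredients as the paper: the count giving at least $\lfloor n/ab\rfloor$ integers in the Farey window, $H(r)=h(r)$ for $r\le 23$ together with monotonicity of $h$, the bound $j(m)\le 2^{\omega(m)}$ for $ab\,j(N)<N$, and the M\"obius count from Theorem~\ref{thm: ta-da}. Where you differ is in how the range $n>ab\,j(N)$ is split.

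The paper splits by size. For $ab\,j(N)<m\le N(a,b)$ it observes that $m\le N(a,b)$ forces $\omega(m)\le r$, so $j(m)\le H(r)=j(N(a,b))$. For $m>N(a,b)$ it tacitly relies on the bound $g(a,b)<N(a,b)$ established after Theorem~\ref{thm: ta-da}, which needed both cases of Lemma~\ref{lem: satan's little helper}.

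You split by $\omega(n)$ instead. The Jacobsthal argument then covers every $n>ab\,j(N)$ with $\omega(n)\le R$, whatever its size. For $\omega(n)>R$ you need only Case~1 of that lemma, i.e.\ Lemma~\ref{lem: ugh FINE} plus $q_i\ge p_i$. Your version makes explicit the treatment of $n>N(a,b)$ that the paper leaves implicit, and it avoids Case~2. The paper's version confines the Jacobsthal step to a finite window and reuses an earlier theorem.

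One loose end should be closed rather than hedged. Your fallback to $n>ab(J+1)$ would not give the stated bound, and it is not needed. Suppose $i=tn/a$ is an integer. Since $as-bt=1$ gives $\gcd(a,t)=1$, we get $a\mid n$ and $i=t\cdot(n/a)$. So $n/a$ divides $\gcd(i,n)$, and $n/a>1$ because $n>ab\,j(N)\ge ab>a$. The same argument applies at the endpoint $sn/b$. Hence, among the $\lfloor n/ab\rfloor\ge J\ge j(n)$ consecutive integers of the closed interval, the one coprime to $n$ is automatically interior, and $i/n$ lies strictly between $t/a$ and $s/b$. The paper's count $\lfloor sn/b\rfloor-\lceil tn/a\rceil+1$ is also over the closed interval and silently relies on this same observation.
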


\begin{proof}
Recall $N(a,b)$ is defined as $p_1p_2 \cdots p_r$ for the minimum $r$ satisfying $ 2ab < \frac{p_3-1}{2} \cdot \frac{p_4-1}{2} \cdots \frac{p_r-1}{2} $, so $\omega\left(N\left(a,b\right)\right)=r$. For $m \leq N(a,b)$, we know $j(m) \leq H(r)$, and so the maximum length of any string of consecutive integers not relatively prime to $m$, that is, $j(m)-1$, is bounded above by $H(r)-1=h(r)-1=j\left(N\left(a,b\right)\right)-1$. Since 
\[ j\left(N\left(a,b\right)\right)-1 < \frac{m}{ab}-1 \iff m > ab \cdot j\left(N\left(a,b\right)\right), \]
it follows that if $m \leq N(a,b)$ and $m > ab \cdot j\left(N\left(a,b\right)\right)$, then $j(m) \leq j(N(a,b)) < \frac{m}{ab}$ and so $m$ appears as a denominator between $\frac{t}{a}, \frac{s}{b}$ in Farey sequences of sufficiently high order.

We know $ 2ab < \frac{p_3-1}{2} \cdot \frac{p_4-1}{2} \cdots \frac{p_r-1}{2} $, so 
\[ ab < \frac{1}{2} \cdot 1 \cdot \frac{p_3-1}{2} \cdot \frac{p_4-1}{2} \cdots \frac{p_r-1}{2} = \frac{p_1-1}{2} \cdot \frac{p_2-1}{2} \cdot \frac{p_3-1}{2} \cdot \frac{p_4-1}{2} \cdots \frac{p_r-1}{2}. \]
Additionally, $j\left(N\left(a,b\right)\right) \leq 2^{\omega\left(N\left(a,b\right)\right)} = 2^r$ by \cite{costello}, so
\begin{align*}
   ab \cdot j\left(N\left(a,b\right)\right) &< \frac{p_1-1}{2} \cdot \frac{p_2-1}{2} \cdot \frac{p_3-1}{2} \cdot \frac{p_4-1}{2} \cdots \frac{p_r-1}{2} \cdot 2^r\\
   &= (p_1-1)(p_2-1)\cdots (p_r-1)\\
   &< p_1p_2 \cdots p_r\\
   &= N(a,b).
\end{align*}
It follows that all $m$ satisfying $ab \cdot j\left(N\left(a,b\right)\right) < m \leq N(a,b)$ appear as denominators between $\frac{t}{a}, \frac{s}{b}$ in Farey sequences of sufficiently high order, and thus that $g(a,b) \leq ab \cdot j\left(N\left(a,b\right)\right) < N(a,b)$ by Corollary \ref{cor:immed2}.
\end{proof} 

We include Table \ref{tab:jacobs} as an illustration of the growth rate of the product of the first $r$ primes compared to the Jacobsthal function applied to the product of the first $r$ primes. We can further refine the upper bound derived using the latter as follows:

\begin{table}
    \centering
   \begin{tabular}{|c|c|c|c|c|c|c|c|c|c| }
\hline
$r$ & 1 & 2 & 3 & 4 & 5 & 6 & 7 & 8 & 9\\
\hline
$p_1p_2 \cdots p_r$ & 2 & 6 & 30 & 210 & 2310 & 30030 & 510510 & 9699690 & 223092870\\
\hline
$j\left(p_1p_2 \cdots p_r \right)$ & 2 & 4 & 6 & 10 & 14 & 22 & 26 & 34 & 40\\
\hline
\end{tabular}
    \caption{For each $r \leq 9$, the product of the first $r$ primes is given as well as the Jacobsthal function applied to these products. For $2 \leq r \leq 23$, if $p_1p_2\cdots p_r$ is an upper bound for $g(a,b)$, then $ab \cdot j\left(p_1p_2 \cdots p_r \right)$ is an improved upper bound.}
    \label{tab:jacobs}
\end{table} 
\begin{cor}
\label{cor:refinement}
For $N(a,b) \leq p_1p_2\cdots p_{23} $, we have \[g(a,b) \leq ab \cdot j\left(\frac{N\left(a,b\right)}{p_{q+1}\cdot p_{q+2}\cdots p_r}\right)\] for $q$ the maximum integer such that $p_1p_2 \cdots p_q \leq ab \cdot j\left(N\left(a,b\right)\right)$.
\end{cor}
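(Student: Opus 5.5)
The plan is to bootstrap Theorem \ref{thm:fareyub}. That theorem already gives $g(a,b) \le B := ab \cdot j(N(a,b))$, so we only need to control denominators $m$ with $m \le B$. Set $P_q := p_1p_2\cdots p_q = N(a,b)/(p_{q+1}\cdots p_r)$, where $q$ is maximal with $P_q \le B$. Since $P_{q+1} > B$, every $m \le B$ has $\omega(m) \le q$: a product of $q+1$ distinct primes is at least $P_{q+1} > B \ge m$.

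The key step replaces the bound $j(m) \le H(r) = h(r)$ used in Theorem \ref{thm:fareyub} with the sharper $j(m) \le H(q)$, valid because $\omega(m) \le q$. Since $q \le r \le 23$, the result of Hajdu and Saradha gives $H(q) = h(q) = j(P_q)$. So every $m \le B$ satisfies $j(m) \le j(P_q)$.

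Then repeat the counting argument preceding Theorem \ref{thm:fareyub}. The open interval $(\frac{tn}{a}, \frac{sn}{b})$ contains at least $\lfloor \frac{m}{ab}\rfloor \ge \frac{m}{ab}-1$ consecutive integers. If $m > ab\cdot j(P_q)$, then $\frac{m}{ab} - 1 > j(P_q) - 1 \ge j(m) - 1$. Since both sides of $\lfloor \frac{m}{ab}\rfloor \ge j(m)$ are integers, this gives a run of length at least $j(m)$. That run contains some $i$ coprime to $m$, so $m$ appears as a denominator between $\frac{t}{a}$ and $\frac{s}{b}$. Consequently every $m$ with $ab\cdot j(P_q) < m \le B$ lies in $S_{(a,b)}$ by Corollary \ref{cor:immed1}. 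Combined with Theorem \ref{thm:fareyub}, which handles $m > B$ (together with the earlier bound $g(a,b) < N(a,b)$, which covers $m \ge N(a,b)$), every $m > ab\cdot j(P_q)$ is represented. Corollary \ref{cor:immed2} then yields the claim.

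The main obstacle is the bookkeeping that makes $q$ well defined and keeps the cited facts in range. We need $q \ge 1$, which holds because $B \ge 2$. We also need $q \le r$, so that $P_q$ divides $N(a,b)$ and $q \le 23$ keeps $H(q) = h(q)$ valid; this follows from $B < N(a,b) = P_r$, established in the proof of Theorem \ref{thm:fareyub}. One must also state carefully the inequality $j(m) \le H(\omega(m)) \le H(q)$. This needs $H$ to be monotone, which follows from $H = h$ on this range together with the cited fact that $h$ is increasing for $r \le 63$. Alternatively, adjoining an unused prime to $m$ cannot decrease $j$.
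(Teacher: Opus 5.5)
Your proof is correct and follows the paper's route. For $m \le ab\cdot j(N(a,b))$ you show $\omega(m)\le q$, hence $j(m)\le H(q)=h(q)=j(p_1\cdots p_q)$, and then rerun the counting argument of Theorem~\ref{thm:fareyub}. Along the way you are more careful than the paper about the monotonicity of $H$, about $q\le r\le 23$, and about including the value $m=ab\cdot j(N(a,b))$ itself, which the paper's range $m<ab\cdot j(N(a,b))$ leaves out.

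One small correction concerns the count. When $ab\mid m$, both endpoints $tm/a$ and $sm/b$ are integers, so the open interval contains only $m/(ab)-1$ integers rather than $\lfloor m/(ab)\rfloor$. Since $m/(ab)>j(m)$ holds strictly, the interval still contains at least $j(m)$ consecutive integers, and your conclusion stands.
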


\begin{proof}
We know by Theorem \ref{thm:fareyub} that $ ab \cdot j\left(N\left(a,b\right)\right) < N(a,b)$. Then $\omega\left(ab \cdot j\left(N\left(a,b\right)\right)\right) < r $, as $N(a,b)$ is the smallest positive integer satisfying $\omega\left(N\left(a,b\right)\right)=r$. Then for $m \leq ab \cdot j\left(N\left(a,b\right)\right)$, the maximum length of any string of consecutive integers not relatively prime to $m$ is \[ H(q)-1=h(q)-1=j\left(N\left(a,b\right)/\left(p_{q+1}\cdot p_{q+2}\cdots p_r \right)\right)-1\]
where $q$ is the maximum integer such that $p_1 \cdot p_2 \cdots p_q < ab \cdot j\left(N\left(a,b\right)\right) $. Repeating the logic of Theorem \ref{thm:fareyub}, it follows that all $m$ satisfying 
\[ab \cdot j\left(N\left(a,b\right)/\left(p_{q+1}\cdot p_{q+2}\cdots p_r \right)\right) < m < ab \cdot j\left(N\left(a,b\right)\right)\] appear as denominators between $\frac{t}{a}, \frac{s}{b}$ in Farey sequences of sufficiently high order, and thus that \[g(a,b) \leq ab \cdot j\left(N\left(a,b\right)/\left(p_{q+1}\cdot p_{q+2}\cdots p_r \right)\right) < ab \cdot j\left(N\left(a,b\right)\right)\] by Corollary \ref{cor:immed2}.
\end{proof}

We may be able to iterate Corollary \ref{cor:refinement} to further improve the upper bound, and we may be able to improve the upper bound via another method as well:

\begin{rem}
If we can show $g(a,b) \neq n$ for all $n \leq ab \cdot j\left(N\left(a,b\right)/\left(p_{q+1}\cdot p_{q+2}\cdots p_r \right)\right)$ satisfying $\omega(n)=q$ in Corollary \ref{cor:refinement}, we may replace $q$ with $q-1$ in the process above. It may be possible to iterate this step.
\label{rem: further refinement}
\end{rem}

We examine the case of $a=5, b=7$ to better illustrate the process of determining this upper bound as well as its strength. We first calculate $N(5,7)=30030$, our previous best known upper bound for $g(5,7)$. By Theorem \ref{thm:fareyub}, we now know $g(5,7) \leq 5 \cdot 7 \cdot j(30030) = 35 \cdot 22 = 770$, and by Corollary \ref{cor:refinement}, we may replace 30030 with 210 to obtain  $g(5,7) \leq 5 \cdot 7 \cdot j(210) = 350$. It takes Mathematica less than a second to determine that 210 and 330 are the only integers $n$ such that $\omega(n)=4$ and $n \leq 350$, and less than a second to verify that $5x+7y=210, 5x+7y=330$ both have solutions where $\gcd(x,y)=1$. It follows that $g(5,7) \leq 5 \cdot 7 \cdot j(30) = 5 \cdot 7 \cdot 6 =210$.  We cannot further refine the bound, as there are multiple integers $n \leq 210$ with $\omega(n)=3$ such that $5x+7y=n$ has no solutions satisfying $\gcd(x,y)=1$. By starting at $n=210$ and counting down to the first $n' \leq 210$ such that $5x+7y=n'$ has no solutions satisfying $\gcd(x,y)=1$, we obtain $g(5,7)=156$.

For the above case, we note that 156 was both the maximum integer satisfying $\omega(n)=3$ with no suitable solutions to $5x+7y=156$ and the exact value of $g(5,7)$. However, this is not always the case: the maximum integer satisfying $\omega(n)=5$ with no relatively prime integer solutions to $49x+93y=n$ is 33930, but we find $g(49,93)=40080$ and $\omega(40080)=4$. In other words, we can use this method to derive an improved upper bound, but not an exact value of $g(a,b)$. However, with an improved upper bound, we can find exact values of $g(a,b)$ more quickly through the process referenced in Section \ref{sec: Exact Values} and indicated by the code shown in Figure \ref{fig:whileloop}. Mathematica runs this code as follows: it starts with a given upper bound $N$ on $g(a,b)$. FrobeniusSolve then gives all pairs of non-negative integers $(x,y)$ that are solutions to $ax+by=N$, and GCDs computes $\gcd(x,y)$ for each pair. AnyOnes determines if $\gcd(x,y)=1$ for any pair, and the exclamation point in front means we determine if $\gcd(x,y) \neq 1$ for all pairs. When this condition is not true, that is, when there is at least one pair satisfying $\gcd(x,y)=1$, we rule out $N$ as the possible value of $g(a,b)$ and the while loop moves on to $N-1$ to try again. The first time we reach an $n$ such that $ax+by=n$ has no non-negative integer solutions satisfying $\gcd(x,y)=1$, the condition that $\gcd(x,y) \neq 1$ for all pairs is true, we know we have found $g(a,b)$, and the loop terminates. Using this code with the computing power of the University of Illinois Campus Cluster, we can derive exact values of $g(a,b)$ for all pairs $a,b \leq 200$ with $\gcd(a,b)=1$, as shown in Figure \ref{fig:fulbatchi}. 

\begin{figure}
\centering
\includegraphics[width=5.5in]{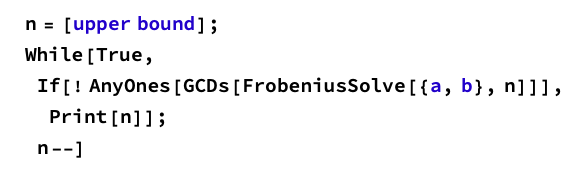}
\caption{Mathematica code that computes $g(a,b)$, given an upper bound, utilizing custom functions GCDs and AnyOnes.}
\label{fig:whileloop}
\end{figure}

\begin{figure}
    \centering
    \includegraphics[width=4.25in]{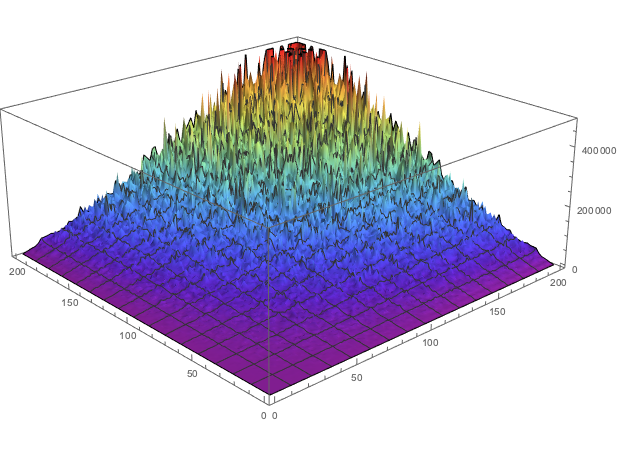}
    \caption{A plot of $g(a,b)$ for all pairs $a,b \leq 200$ with $\gcd(a,b)=1$.}
    \label{fig:fulbatchi}
\end{figure} 

\section{Future Directions}
\label{sec: future}
With the glut of new data generated in the previous section, we can search for patterns to tell us more about the behavior of $g(a,b)$. In particular, one new theorem and two new conjectures arise.

\begin{thm}
\label{multiples}
For all $\gcd(ua,b)=\gcd(a,b)=1$ and $ u > 1$, we have $g(ua,b) \geq u \cdot g(a,b)$.
\end{thm}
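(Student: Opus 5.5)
The plan is to work directly from the definition $g(a,b)=\sup\left(\mathbb{Z}\setminus S_{(a,b)}\right)$ together with Theorem \ref{thm: big buddy}, and to show that multiplying by $u$ carries an inexpressible integer for the pair $(a,b)$ to an inexpressible integer for the pair $(ua,b)$.

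First I check that $g(a,b)$ is an integer that is itself inexpressible. The complement $\mathbb{Z}\setminus S_{(a,b)}$ is nonempty: $0$ lies in it, since $0=ax+by$ with $x,y\geq 0$ forces $x=y=0$, and $\gcd(0,0)=0\neq 1$. By Theorem \ref{thm: ta-da} the complement is bounded above. Hence its supremum $n:=g(a,b)$ is a maximum, and $n\notin S_{(a,b)}$. The same reasoning gives $g(ua,b)\geq 0$, which settles the degenerate case $g(a,b)=0$ (for instance $g(1,2)=0$). Also $n\geq 0$, since negative integers are trivially inexpressible and $0$ already is.

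The main step is to show that $un\notin S_{(ua,b)}$. Suppose instead that $un=(ua)x+by$ with $x,y\in\mathbb{Z}_{\geq 0}$ and $\gcd(x,y)=1$. Then $u\mid by$. The hypothesis $\gcd(ua,b)=1$ gives $\gcd(u,b)=1$, so $u\mid y$; write $y=uy'$ with $y'\in\mathbb{Z}_{\geq 0}$. Dividing by $u$ gives $n=ax+by'$. Moreover $\gcd(x,y')$ divides $\gcd(x,uy')=\gcd(x,y)=1$, so $\gcd(x,y')=1$. By Theorem \ref{thm: big buddy} this would put $n\in S_{(a,b)}$, a contradiction.

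Therefore $un\in\mathbb{Z}\setminus S_{(ua,b)}$, and so $g(ua,b)\geq un=u\cdot g(a,b)$. Finiteness of $g(ua,b)$ is not needed for the inequality, although it holds anyway by Theorem \ref{thm: ta-da}. The only delicate point I expect is the bookkeeping in the first step: confirming that the supremum is attained, so that $g(a,b)$ is a genuine inexpressible value and not just a bound. That is exactly why I use $0\notin S_{(a,b)}$ and Theorem \ref{thm: ta-da}. The divisibility argument itself is routine once $\gcd(u,b)=1$ has been extracted from $\gcd(ua,b)=1$.
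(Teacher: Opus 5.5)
Your proof is correct and is essentially the contradiction argument the paper alludes to without writing out. Assuming $u\cdot g(a,b)=(ua)x+by$ with $\gcd(x,y)=1$, the hypothesis gives $\gcd(u,b)=1$, which forces $u\mid y$, and dividing by $u$ yields a coprime representation of $g(a,b)$ itself, a contradiction; your check that the supremum is attained (via $0\notin S_{(a,b)}$ and Theorem~\ref{thm: ta-da}) supplies a detail the paper leaves implicit.
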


This is not difficult to prove by contradiction. The following conjectures hold for all data points generated thus far:

\begin{conj}
\label{conj:frob}
For all $a,b \geq 2$ relatively prime, we have $g(a,b) \geq 4 f(a,b)$.
\end{conj}

\begin{conj}
\label{conj:bstar}
For all $a,b \geq 3$ relatively prime and for $c(a,b)$ the minimum integer satisfying both $c(a,b)>b$ and $\gcd(c(a,b),a)=1$, we have $g(a,c(a,b))/g(a,b) \leq 5$.
\end{conj}

One natural future direction is to see if these conjectures can be proved or disproved. We also encounter a pattern that is somewhat too subtle to propose as a formal conjecture: define $K$ to be the set of all data points $(a,b,g(a,b))$ with $3 \leq a < b \leq 200$. Then $|K|=11933$, and by comparing 11933 divided by a prime $p$ to the number of data points for which $p$ divides $g(a,b)$, we observe that $g(a,b)$ seems to be disproportionately divisible by small primes: see Figure \ref{fig:smollprimes}. What exactly is this pattern telling us, and how might we prove what we see is true?

\begin{figure}[h]
    \centering
    \includegraphics[width=5in]{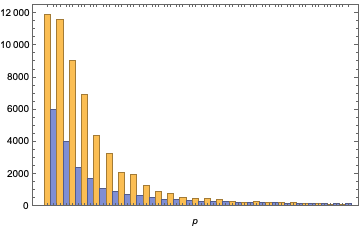}
    \caption{Along the $x$-axis, primes $p$ between 2 and 97 are plotted. In the yellow/leftmost of each pair of bars, the number of points in $K$ for which $g(a,b)$ is divisible by $p$ is plotted, and in the blue/rightmost of each pair of bars, $11933/p$, that is, the size of $K$ divided by $p$, is plotted.}
    \label{fig:smollprimes}
\end{figure}

Another avenue for future study concerns focusing on the cases where $b=3,4,6$, as there appears to be a rich but puzzling structure here. We focus on $b=3$, $a \equiv 1 \mod 3$ to illustrate our point in Figure \ref{fig:1mod3ses}. At each stage, $g(a,b)$ appears to be a roughly linear step function of $a$, with some aberrations in step size, but interestingly, we find different typical step sizes when we consider $a \leq 100$, $a \leq 1000$, $a \leq 10000$, and $a \leq 100000$ separately. What causes the step sizes to vary? Can we extrapolate what would happen if we consider $a \leq 1000000$ from the graphs given? How do we account for the atypical steps? 

\begin{figure}[h]
    \centering
    \includegraphics[width=3in]{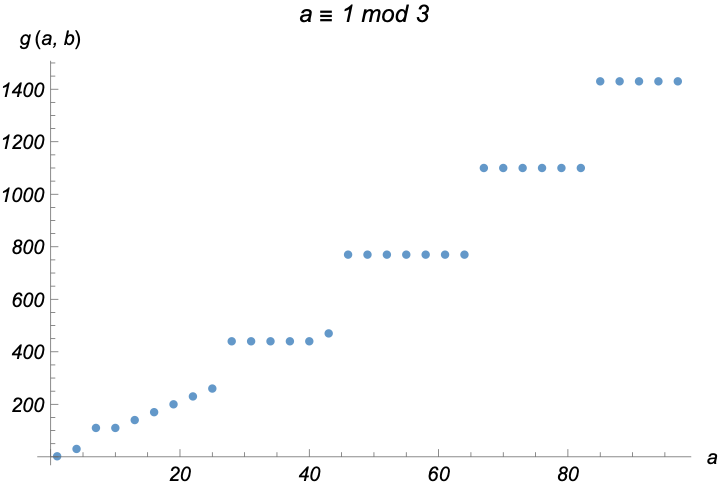}
    \includegraphics[width=3in]{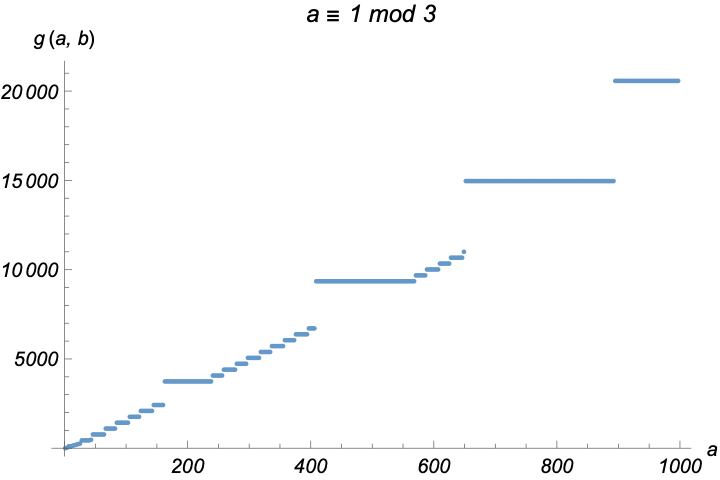}
    \includegraphics[width=3in]{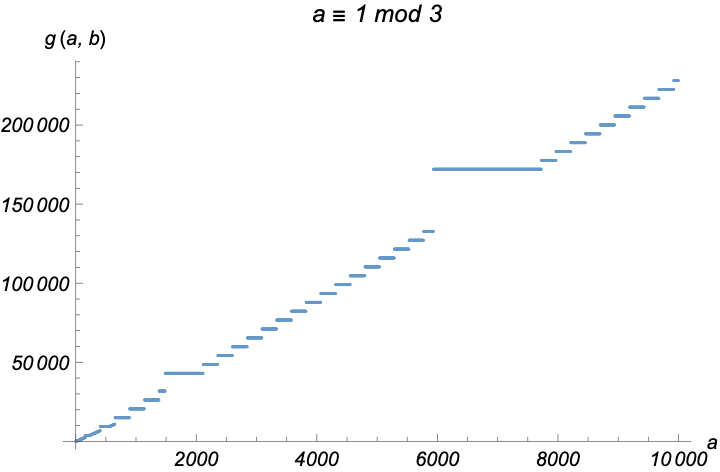}
    \includegraphics[width=3in]{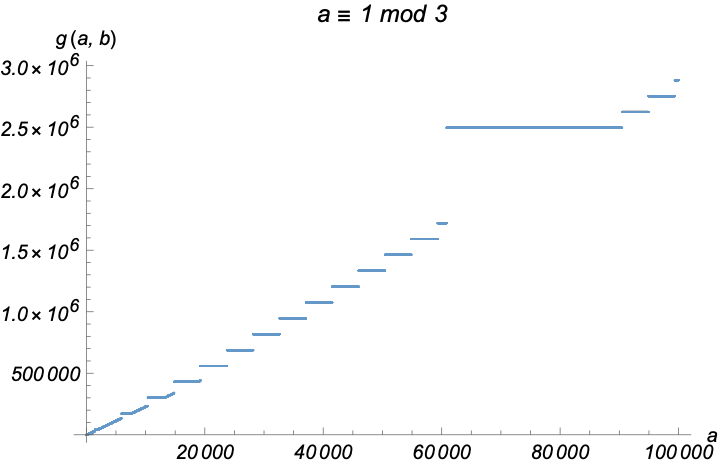}
    \caption{For $b = 3$ and $a \equiv 1 \mod 3$, $g(a,b)$ is graphed as a function of $a$ for $a \leq 100$, $a \leq 1000$, $a \leq 10000$, and $a \leq 100000$.}
    \label{fig:1mod3ses}
\end{figure}

Our focus thus far has been limited to finding $g(a,b)$, the greatest integer inexpressible as $ax+by$ under the given conditions, but what of the other inexpressible integers less than $g(a,b)$? Is there structure lurking in their distribution? This is another puzzle for which further study may prove fruitful.

\end{document}